\DeclareMathOperator{\Prob}{\mathbf{P}}   
\DeclareMathOperator{\E}{\mathbf{E}}     
\DeclareMathOperator{\Var}{\mathbf{Var}}      
\DeclareMathOperator{\Disc}{\mathbf{D}}
\newcommand\eps{{\varepsilon}}
\newcommand{\CC}{{\mathbb C}}
\newcommand{\RR}{{\mathbb R}}
\newcommand{\ZZ}{{\mathbb{Z}}}
\newcommand{\Sphere}{{\mathbb{S}^2}}
\newcommand{\modifiedX}{{\tilde{\mathcal{X}}}}
\renewcommand\Re{{\operatorname{Re}}}
\newcommand{\dist}{\overset{\mbox{\rm \scriptsize d}}{\sim}}
\newcommand{\todist}{\overset{\mbox{\rm \scriptsize d}}{\longrightarrow}}
\newcommand{\dd}{{\mathrm{d}}}            
\newcommand{\Id}{{\mathrm{Id}}}
\renewcommand{\L}{{\mathrm{L}}}
\newtheorem{thm}{Theorem}[section]
\newtheorem{cor}[thm]{Corollary}
\newtheorem{lem}[thm]{Lemma}
\newtheorem*{lem*}{Lemma}
\newtheorem{prop}[thm]{Proposition}
\newtheorem*{prop*}{Proposition}
\newtheorem*{theo*}{Theorem}
\theoremstyle{definition}
\theoremstyle{remark}
\newtheorem*{rem}{Remark}
\numberwithin{equation}{section}
\begin{document}
\title[The spherical ensemble] 
{The spherical ensemble and uniform distribution of points on the sphere}

\author{Kasra Alishahi}
 \address{Department of mathematics, Sharif University of Technology, Tehran, Iran}
 \email{alishahi@sharif.edu}

\author{Mohammad Sadegh Zamani}
 \address{Department of mathematics, Sharif University of Technology, Tehran, Iran}
 \email{ms\_zamani@mehr.sharif.edu}

\begin{abstract}
The spherical ensemble is a well-studied determinantal process with a fixed number of points on $\Sphere$. The points of this process correspond to the generalized eigenvalues of two appropriately chosen random matrices, mapped to the surface of the sphere by stereographic projection. This model can be considered as a spherical analogue for other random matrix models on the unit circle and complex plane such as the circular unitary ensemble or the Ginibre ensemble, and is one of the most natural constructions of a (statistically) rotation invariant point process with repelling property on the sphere.

In this paper we study the spherical ensemble and its local repelling property by investigating the minimum spacing between the points and the area of the largest empty cap. Moreover, we consider this process as a way of distributing points uniformly on the sphere. To this aim, we study two "metrics" to measure the uniformity of an arrangement of points on the sphere. For each of these metrics (discrepancy and Riesz energies) we obtain some bounds and investigate the asymptotic behavior when the number of points tends to infinity. It is remarkable that though the model is random, because of the repelling property of the points, the behavior can be proved to be as good as the best known constructions (for discrepancy) or even better than the best known constructions (for Riesz energies).
\end{abstract}

\maketitle
\setcounter{footnote}{1}
\section{{\bf Introduction}}
\subsection{Background}
The aim of this paper is to study the statistical properties of a natural point process on the sphere where the points exhibit repulsive behavior. This point process was introduced in \cite{Krishnapur: thesis} and is known as spherical ensemble; see \cite{GAF-Book} and  \cite{Krishnapur}. The model was studied earlier in \cite{Caillol} and \cite{Forrester}, but
without observing the connection to random matrices. It was shown in \cite{Caillol,Forrester} that there exists a connection between this model and the one-component plasma on the sphere for a special value of temperature. See \cite{Book: Forrester} for further discussion.

Let $A_n$ and $B_n$ be independent $n\times n$ random matrices with independent and identically distributed standard complex Gaussian entries, and let $\{\lambda_1,\lambda_2, \ldots, \lambda_n\}$ denotes the set of eigenvalues of $A_n^{-1}B_n$. We can consider these eigenvalues as a (simple) random point process on complex plane $\CC$. The point process $\{\lambda_1,\ldots,\lambda_n\}$ can be described using the {\it $k$-point correlation functions $\rho_k^{(n)}: \CC^k \to \RR^{\geq 0}$}, $1 \leq k \leq n$, defined in such a way that
\begin{align} \label{eqn: def of rho}
\int_{\CC^k} &F(z_1,\ldots,z_k) \rho_k^{(n)}(z_1,\ldots,z_k) \,\dd \mu(z_1)\ldots \dd \mu(z_k)\\
&=\E \sum_{\substack{i_1,\ldots,i_k \\ \mbox{\scriptsize{\textit{pairwise distinct}}}}} F(\lambda_{i_1},\ldots,\lambda_{i_k}), \nonumber
\end{align}
for all continuous, compactly supported functions $F:\CC^k \to \CC$, where $\dd \mu(z):=\frac{n}{\pi (1+|z|^2)^{n+1}} \dd z$ and $\dd z$ denotes the Lebesgue measure on the complex plane $\CC$.

Krishnapur \cite{Krishnapur} showed that this random point process is a determinantal point process on complex plane with kernel
\[K^{(n)}(z,w)=(1+z\bar{w})^{n-1}\]
with respect to the background measure $\dd \mu(z)$, i.e. we have
\begin{equation} \label{eqn: Determinantal form}
\rho_k^{(n)}(z_1,\ldots,z_k)=\det \left[K^{(n)}(z_i,z_j)\right]_{i,j=1}^{k}
\end{equation}
for every $k \geq 1$ and $z_1,\ldots,z_k \in \CC$. 
We note here that a random point process is said to be a {\it determinantal point process} if its $k$-point correlation functions have determinantal form similar to (\ref{eqn: Determinantal form}). The corresponding kernel $K^{(n)}(z,w)$ is called a correlation kernel of the determinantal point process. We refer to \cite{GAF-Survey} or \cite{GAF-Book} and references therein  for more information on deteminantal point processes.   

Let $\Sphere=\{p \in \RR^3: |p|=1\}$ be the unit two-dimensional sphere centred at the origin in three-dimensional Euclidean space $\RR^3$. Also we let $\nu$ denotes the Lebesgue surface area measure on this sphere with total measure $4 \pi$. As mentioned in \cite{GAF-Book}, these eigenvalues are best thought of as points on $\Sphere$, using stereographic projection.  Let $g$ be the stereographic projection of the sphere $\Sphere$ from the north pole onto the plane $\{(t_1,t_2,0); t_1,t_2 \in \RR\}$. If we let $P_i=g^{-1}(\lambda_i)$ for $1 \leq i \leq n$ then the vector $(P_1,\ldots,P_n)$, in uniform random order, has the joint density
\[\mbox{Const.} \prod_{i<j} |p_i-p_j|^2\] 
with respect to Lebesgue measure on $(\Sphere)^n$ where $|p_i-p_j|$ denotes the Euclidean distance between two points $p_i$ and $p_j$.     Note that this density is similar to the circular unitary ensemble case and clearly this point process is invariant in distribution under the isometries of $\Sphere$.
Consider the point process on $\Sphere$,
 \[\mathcal{X}^{(n)}:=\sum_{j=1}^n \delta_{P_j}.\]
 We know that $\frac{1}{n} \mathcal{X}^{(n)}$ converges almost surely to the uniform measure on the sphere. In fact, it is also true in the more general case:
Let $A'_n$ and $B'_n$ be independent $n\times n$ random matrices with i.i.d entries with mean 0 and variance 1, and let $\{\lambda'_1,\lambda'_2, \ldots, \lambda'_n\}$ denotes the set of eigenvalues of $A_{n}^{'-1} B'_n$.
Based on the results of \cite{Tao}, Bordenave \cite{Bordenave} shows that 
$\frac{1}{n} \sum_{j=1}^n \delta_{g^{-1}(\lambda'_j)}$ 
converges almost surely to uniform measure on $\Sphere$ as $n \to +\infty.$

Moreover from the repulsive nature of determinantal point processes we expect that the points of process $\mathcal{X}^{(n)}$ are typically more evenly distributed than $n$ independently chosen uniform points on the sphere.
This repelling property is the common feature of many models in random matrix theory and has been comprehensively studied in some special models such as the Gaussian unitary or the circular unitary ensembles. For example, the distribution or the minimum or the maximum of the gaps between consecutive eigenvalues have been computed and compared to simpler models as a way to measure and understand the repulsive structure. One of the goals of this paper is to do the same computations for the spherical model. We specially focus on the minimum distance between the points, the area of the largest empty cap, the hole probability and the limiting distribution of the nearest neighbors distances as natural two-dimensional extensions of the so called metrics studied in one dimensional models.

On the other hand we exploit this model and its properties to the classic problem of distributing points on the sphere. The problem of distributing a given number of points on the surface of a sphere "uniformly", is a challenging and old problem. Contrary to the one dimensional case (i.e. distributing points on a circle) where the most uniform arrangement clearly exists and is attained when the points are equidistributed, it seems that there is no arrangement on the sphere that can be considered as completely uniform, and the answer for the best arrangement depends on what criteria do we use to quantify the uniformity of an arrangement. Among the mostly used criteria are those related to the electrostatic potential energy and its generalizations where one tries to distribute the points in a way that minimizes some energy function. Another common metric is the discrepancy that measures the maximum deviance between the number of points and the expected number, in some class of regions in the underlying space (sphere, in our case). Both the energy and the discrepancy optimization problems, i.e. finding the most optimum arrangement or even obtaining some relatively sharp upper and lower bounds are open and challenging problems for the sphere. We study these metrics (discrepancy and Riesz energies) to measure the uniformity of points of $\mathcal{X}^{(n)}$. For each of these metrics  we obtain some bounds and investigate the asymptotic behavior when the number of points tends to infinity. It is remarkable that though the model is random, because of the repelling property of the points, the behavior can be proved to be as good as the best known constructions (for discrepancy) or even better than the best known constructions (for Riesz energies). 
 
 The main results of the paper are stated in the next subsection, together with definition and some properties of the metrics discussed above. 
\subsection{Main results}
\subsubsection*{{\bf Discrepancy}}
The geometrically most natural measure for the uniformity of the distribution of an $n$-point set on $\Sphere$ is the {\it spherical cap discrepancy}. 
Let $\mathcal{P}_n=\{x_1,\ldots,x_n\}$ be an $n$-point set on $\Sphere$. The spherical cap discrepancy of $\mathcal{P}_n$ is defined as
\[\mathbf{D}(\mathcal{P}_n):=\sup_{D \in \mathcal{A}} \left| \sum_{j=1}^{n} 1_D(x_j)-\frac{n |D|}{4\pi} \right| \]
where  $\mathcal{A}$ is the set of all spherical caps on $\Sphere$. A spherical cap is defined as the intersection of the sphere and a half-space.   
In \cite{Beck}, it was shown that there is constant $c>0$, independent of $n$, such that for any $n$-point set $\mathcal{P}_n$ on $\Sphere$  we have
\[cn^{1/4}\leq \mathbf{D}(\mathcal{P}_n).\]
On the other hand, using probabilistic methods  it has been shown (see \cite{Beck2}) that for any $n \geq 1$, there exists $n$-point set $\mathcal{P}_n$ on $\Sphere$ such that 
\[\mathbf{D}(\mathcal{P}_n) \leq Cn^{1/4} \sqrt{\log n}.\]
 
The following theorem shows that the point process $\mathcal{X}^{(n)}$  has small spherical cap discrepancy.

\begin{thm} \label{thm: Discrepancy}
Consider the point process $\mathcal{X}^{(n)}=\sum_{i=1}^n \delta_{P_i}$. For every $M>0$ independent of $n$, one has 
\[\mathbf{D}(\{P_1,\ldots,P_n\})=O\left(n^{1/4}\sqrt{\log n} \right)\]     
with probability $1-\frac{1}{n^M}.$
\end{thm}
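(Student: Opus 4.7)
The plan is to combine a sharp concentration estimate for the number of points in a single spherical cap with a net argument that promotes the bound uniformly to all caps. For a fixed cap $D$, let $N_D := \#\{j : P_j \in D\}$. Since $K^{(n)}$ is the reproducing kernel of the $n$-dimensional space of polynomials of degree $\le n-1$, it is a projection kernel, and the Hough--Krishnapur--Peres--Vir\'ag theorem yields the distributional identity $N_D \dist \xi_1+\cdots+\xi_n$ for independent Bernoulli variables whose parameters are the eigenvalues of the compressed kernel operator on $L^2(D,\mu)$. In particular $\E N_D = n|D|/(4\pi)$ exactly, and
\[
\Var(N_D) \;=\; \iint_{D\times D^{c}} |K^{(n)}(z,w)|^{2}\,\dd\mu(z)\,\dd\mu(w).
\]

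Converting to intrinsic coordinates on $\Sphere$ using the identity $|1+z\bar w|^{2}/((1+|z|^{2})(1+|w|^{2})) = 1-|p-q|^{2}/4 = \cos^{2}(\theta(p,q)/2)$, where $\theta(p,q)$ is the angular distance, the integrand becomes $\tfrac{n^{2}}{16\pi^{2}}\cos^{2(n-1)}(\theta(p,q)/2)$, which decays like $e^{-n\theta^{2}/4}$. The relevant length scale is therefore $1/\sqrt n$, and integration over a boundary strip of width $O(1/\sqrt n)$ and perimeter $O(1)$ yields the crucial uniform bound $\Var(N_D) \le c\sqrt n$ for every spherical cap $D$. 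With this variance estimate, Bernstein's inequality for sums of independent Bernoullis gives, for $t$ in the regime $1 \le t \le \sqrt n$,
\[
\Prob\bigl(|N_D - \E N_D| > t\bigr) \;\le\; 2\exp\!\left(-\frac{c\, t^{2}}{\sqrt n}\right),
\]
so choosing $t = C n^{1/4}\sqrt{\log n}$ makes the single-cap tail probability at most $2\,n^{-cC^{2}}$.

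To promote this to a statement about all caps simultaneously, I would parametrize spherical caps by axis (on $\Sphere$) and angular radius, a three-parameter family, and take an $\epsilon$-net with $\epsilon = n^{-3/4}$, of cardinality $O(n^{9/4})$. A union bound then controls the discrepancy of every net cap by $O(n^{1/4}\sqrt{\log n})$ with probability $\ge 1 - n^{-M}$, provided $C$ is chosen sufficiently large in terms of $M$. For an arbitrary cap $D$ I would sandwich it between two net caps $D_-\subset D\subset D_+$ with $|D_+\setminus D_-|\le c\epsilon$: the deterministic error $n|D_+\setminus D_-|/(4\pi) = O(n^{1/4})$ is absorbed into the target bound, while the monotonicity $N_{D_-}\le N_D\le N_{D_+}$ lets $N_D$ inherit concentration directly from the (slightly enlarged) net.

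The main obstacle, and the whole point of exploiting the determinantal structure, is the variance estimate $\Var(N_D)=O(\sqrt n)$. The naive bound $\Var(N_D)\le \E N_D = O(n)$ would only give Chernoff deviations of order $\sqrt{n\log n}$, far weaker than the target $n^{1/4}\sqrt{\log n}$. It is precisely the Gaussian-type off-diagonal decay of $|K^{(n)}|^{2}$ on the scale $1/\sqrt n$ that collapses the variance from $n$ down to $\sqrt n$, and hence makes the claimed discrepancy scaling possible.
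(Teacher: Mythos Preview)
Your proposal is correct and follows the same architecture as the paper's proof: bound $\Var(N_D)$ by $c\sqrt n$ uniformly over caps, feed this into Bernstein's inequality for the sum of independent Bernoullis (guaranteed by the projection-kernel structure), and pass to all caps via a finite net and a sandwich/union-bound argument. The only substantive difference is how the variance bound is obtained. You compute $\Var(N_D)=\iint_{D\times D^c}|\mathring K^{(n)}|^2$ directly on the sphere, using the Gaussian-type decay $|\mathring K^{(n)}(p,q)|^2=(n/4\pi)^2\cos^{2(n-1)}(\theta/2)$ on scale $1/\sqrt n$ to reduce the integral to a boundary strip of area $O(1/\sqrt n)$; the paper instead exploits rotational symmetry to identify the Bernoulli parameters explicitly as $\Prob(S_n>k)$ for a binomial $S_n\sim\mathrm{Bin}(n,|D|/4\pi)$, writes $\Var(N_D)=\sum_k \Prob(S_n\le k)\Prob(S_n>k)$, and bounds this sum by $c\sigma+c'$ with $\sigma=\sqrt{n\alpha(1-\alpha)}\le\sqrt n/2$. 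Your geometric argument is more robust (it would transfer to other projection ensembles on the sphere), while the paper's explicit formula also yields the sharp constant in Lemma~\ref{lem:variance} and the exact eigenvalue description used elsewhere. The net parameters differ cosmetically: the paper takes caps with area gap $\le 1/n$ (so $O(n^c)$ caps and deterministic error $O(1)$), you take an $n^{-3/4}$-net (so $O(n^{9/4})$ caps and deterministic error $O(n^{1/4})$); both are absorbed in the target $O(n^{1/4}\sqrt{\log n})$.
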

Note that for independent uniform points on sphere, the discrepancy is of order $\sqrt{n}$ (up to a logarithmic factor).  

The key to the proof of Theorem \ref{thm: Discrepancy} is an estimate on the variance of the number of points of $\mathcal{X}^{(n)}$ on a spherical cap. The asymptotic expansion of this variance and the proof of Theorem \ref{thm: Discrepancy} will be presented in Section 2.
\subsubsection*{{\bf Largest empty cap}}
Given $\mathcal{P}_n=\{x_1,\ldots,x_n\} \subset \Sphere$, define the covering radius of $\mathcal{P}_n$ as the infimum of the numbers $t>0$ such that every point of $\Sphere$  is within distance $t$ of some $x_j$. If we let $\tau$ be the covering radius of $\mathcal{P}_n$, then the area of the largest spherical cap which does not contain any point of $\mathcal{P}_n$ in its interior is equal to $\pi \tau^2$ (note that, for fixed $q$, the area of the spherical cap $\{p \in \Sphere: |p-q| \leq r\}$ is $\pi r^2$).  
We will be interested in studying the asymptotic behavior of the area of the largest empty cap for the spherical ensemble. 

Let $M_n$ be the area of largest empty cap for random point process $\mathcal{X}^{(n)}$. In the following theorem, we derive first-order asymptotic for $M_n$.  
\begin{thm} \label{thm: Largest empty cap}
For any $s>0$ we have
\[\frac{n}{8\pi \sqrt{\log n}}M_n \overset{L^s}{\longrightarrow} 1\]
as $n \to +\infty$.
\end{thm}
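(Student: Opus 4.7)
The plan is to reduce everything to a sharp hole-probability estimate for a single spherical cap, prove polynomial two-sided tail bounds on $Z_n:=nM_n/(8\pi\sqrt{\log n})$, and finally upgrade convergence in probability to $L^s$ by uniform moment control. By rotational invariance, every cap of area $A$ is equivalent to the cap around the south pole, which under stereographic projection corresponds to the disk $\{|z|\le r\}\subset\CC$ with $p:=r^2/(1+r^2)=A/(4\pi)$. The system $\phi_k(z):=\binom{n-1}{k}^{1/2}z^k$, $k=0,\ldots,n-1$, is orthonormal for $\mathrm{d}\mu$, so $K^{(n)}(z,w)=\sum_k\phi_k(z)\overline{\phi_k(w)}$, and by rotational symmetry these same $\phi_k$ diagonalize $K^{(n)}|_C$. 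A Beta--Binomial computation identifies the eigenvalues as $\lambda_k(r)=\Prob(\mathrm{Binomial}(n,p)\ge k+1)$, so with $\mu:=np=nA/(4\pi)$,
\[\Prob\bigl(\mathcal{X}^{(n)}(C)=0\bigr)=\prod_{k=0}^{n-1}\bigl(1-\lambda_k(r)\bigr)=\prod_{k=0}^{n-1}\Prob\bigl(\mathrm{Binomial}(n,p)\le k\bigr).\]
Feeding in the Chernoff estimate $\log\Prob(\mathrm{Binomial}(n,p)\le k)=-\mu\,h(k/\mu)(1+o(1))$ with rate function $h(u)=u\log u-u+1$ and recognizing the Riemann sum $\sum_{k\le\mu}h(k/\mu)\sim\mu\int_0^1 h(u)\,\mathrm{d}u=\mu/4$ (the tail $k>\mu$ contributes only $O(\sqrt\mu)$ since $\sum_{k>\mu}\lambda_k=O(\sqrt\mu)$) yields the key asymptotic
\[\log\Prob\bigl(\mathcal{X}^{(n)}(C)=0\bigr)=-\frac{\mu^2}{4}\bigl(1+o(1)\bigr)\qquad(\mu\to\infty,\ \mu=o(\sqrt n)),\]
which matches the infinite-Ginibre hole probability as expected from local universality.

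With $M_\star:=8\pi\sqrt{\log n}/n$ the critical cap has $\mu_\star=2\sqrt{\log n}$ and hole probability $n^{-1+o(1)}$. For the upper bound on $Z_n$, fix $\eta>0$, set $M=(1+\eta)M_\star$, and cover $\Sphere$ by a net of spacing $\delta\sqrt{M/\pi}$ with $N=O(n/(\delta^2\sqrt{\log n}))$ points; any empty cap of area $M$ contains an empty cap of area $(1-\delta)^2M$ around some net point, so a union bound gives $\Prob(Z_n>1+\eta)\le N\cdot n^{-(1+\eta)^2(1-\delta)^4+o(1)}$, which decays polynomially in $n$ once $\delta$ is small enough. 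The same argument with $t$ in place of $1+\eta$ yields a uniform tail $\Prob(Z_n>t)\le n^{1-ct^2+o(1)}$ for $t\in[1,T_n]$, with the trivial bound $M_n\le 4\pi$ making $Z_n$ identically bounded by $n/(2\sqrt{\log n})$. For the lower bound, pack $N'=\Theta(n/\sqrt{\log n})$ disjoint caps $D_1,\ldots,D_{N'}$ of area $(1-\eta)M_\star$ and let $Y:=\sum_i\mathbf{1}_{\mathcal{X}^{(n)}(D_i)=0}$; then $\E[Y]=N'\cdot n^{-(1-\eta)^2+o(1)}\to\infty$. The negative-association property of determinantal processes, applied to the increasing indicators $\mathbf{1}_{\mathcal{X}^{(n)}(D_i)\ge 1}$ on disjoint regions, implies $\Prob(D_i,D_j\text{ both empty})\le\Prob(D_i\text{ empty})\Prob(D_j\text{ empty})$, whence $\Var(Y)\le\E[Y]$ and Chebyshev gives $\Prob(Z_n<1-\eta)\le 1/\E[Y]\to 0$.

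Combined, these give $Z_n\to 1$ in probability with polynomially small deviations, and the upper-tail bound makes $\E[Z_n^q]$ uniformly bounded in $n$ for every $q\ge 1$ (split $\int qt^{q-1}\Prob(Z_n>t)\,\mathrm{d}t$ at $t=2$; the remaining tail is controlled by the Gaussian-type factor $n^{-ct^2}$). By Cauchy--Schwarz, for any $s>0$ and $\epsilon>0$,
\[\E\bigl[|Z_n-1|^s\bigr]\le\epsilon^s+\E\bigl[(Z_n+1)^{2s}\bigr]^{1/2}\Prob(|Z_n-1|>\epsilon)^{1/2}\longrightarrow\epsilon^s,\]
and sending $\epsilon\downarrow 0$ gives $L^s$ convergence. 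The principal obstacle I anticipate is the sharp leading constant $1/4$ in the hole-probability expansion: the product $\prod(1-\lambda_k)$ is dominated by indices $k$ in the transition window $k=\mu+O(\sqrt\mu)$ where neither the Gaussian approximation nor the Chernoff tail is by itself sharp, so one must control the binomial CDF uniformly across this window and carry out the summation precisely enough to produce the $(1+o(1))$ factor required for the two-sided tail estimates to close the $\eta\downarrow 0$ limit.
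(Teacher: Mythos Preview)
Your approach matches the paper's: hole probability as $\prod_k \Prob(S_n\le k)$ with $S_n\sim\mathrm{Bin}(n,\alpha)$, upper tail via a net and union bound, lower tail via disjoint caps plus negative association and Chebyshev, then $L^s$ from tail control.

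Two small patches are needed. First, to bound $\E[Z_n^q]$ you need the tail estimate for all $t$ up to the deterministic cutoff $n/(2\sqrt{\log n})$, but your hole-probability asymptotic is derived only under $\mu=o(\sqrt n)$, i.e.\ $t=o(\sqrt{n/\log n})$; the intermediate range is not covered by either your sharp estimate or the trivial bound $Z_n\le n/(2\sqrt{\log n})$. The paper closes this with the crude inequality $\Delta_n(\alpha)\le\Prob(S_n=0)=(1-\alpha)^n$, which already gives $\Prob(Z_n>t)=o(n^{-C})$ for any fixed $C$ once $t\ge\log n$. Second, your ``principal obstacle'' is misplaced: the transition window $k=\mu+O(\sqrt\mu)$ contributes only $O(\sqrt\mu)$ to $\log\Delta_n$ (this is exactly your own observation that $\sum_{k>\mu}\lambda_k=O(\sqrt\mu)$, combined with $|\log(1-\lambda_k)|\le 2\lambda_k$ there), so the sharp $-\mu^2/4$ comes entirely from $k\le\mu$. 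What you actually need there is not Gaussian control of the window but a matching \emph{lower} bound on $\Prob(S_n\le k)$; Stirling applied to the single term $\binom{n}{k}\alpha^k(1-\alpha)^{n-k}$ loses only $\tfrac12\log\mu$ per index, which suffices.
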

The proof of this theorem is given in Section 3. For the proof we need asymptotics of the {\it hole probability}, the probability that there are no points of $\mathcal{X}^{(n)}$ in a given spherical cap. The desired asymptotics of the hole probability will be given in Lemma \ref{Prop: Hole probability}. Then we will prove Theorem \ref{thm: Largest empty cap} using a method similar to the proof of Theorem 1.3 in \cite{Arous}. Notice that for independent uniform points on $\Sphere$, the area of largest empty cap is of order $\frac{\log n}{n}$.

At the end of Section 3 we study the nearest neighbour statistics of spherical ensemble and show a connection between the local behavior of this model and the Ginibre ensemble.
\subsubsection*{\bf{Riesz and logarithmic energy}}
In Section 4, we compute the expectations of the logarithmic energy and Riesz $s$-energy of the random point process $\mathcal{X}^{(n)}$ on $\Sphere$. 
The discrete logarithmic energy of $n$ points $x_1,\ldots,x_n$ on $\Sphere$ is given by
\begin{equation*}
E_{\log}(x_1,\ldots,x_n):=\log \prod_{i\neq j} \frac{1}{|x_i-x_j|}=\sum_{i \neq j} \log \frac{1}{|x_i-x_j|}.
\end{equation*}
Also we define 
\begin{equation*}
\mathcal{E}_{\log}(n):=\min \{E_{\log}(x_1,\ldots ,x_n); x_1,\ldots ,x_n \in \Sphere\}.
\end{equation*}
The n-tuples that minimize this energy are usually called elliptic Fekete Points. 
Define $C_n$ by 
\[\mathcal{E}_{\log}(n)=\left(\frac{1}{2}-\log 2\right)n^2-\frac{1}{2} n\log n+C_n n.\]
In \cite{RSZ94} it was shown that $C_n$ satisfies the following estimates
\begin{equation*}
-0.22553754 \dots \leq \liminf_{n \to +\infty} C_n \leq \limsup_{n \to +\infty} C_n \leq -0.0469945 \dots.
\end{equation*}

For a given $s$, the \emph{Riesz $s$-energy} of $n$ points $x_1,\ldots,x_n$ on $\Sphere$ are defined as
\begin{equation*}
E_{s}(x_1,\ldots,x_n):=\sum_{i \neq j}  \frac{1}{|x_i-x_j|^s}.
\end{equation*}
Also, we consider the optimal $n$-point Riesz $s$-energy,
\begin{equation*}
\mathcal{E}_{s}(n):= \left\{
\begin{array}{rl}
\min \{E_{s}(x_1,\ldots,x_n); x_1,\ldots,x_n \in \Sphere\} & \text{if } s \geq 0\\ \\
\max \{E_{s}(x_1,\ldots,x_n); x_1,\ldots,x_n \in \Sphere\} & \text{if } s<0. 
\end{array} \right.
\end{equation*}
The important special case $s=1$ corresponds to  electrostatic potential energy of electrons on $\Sphere$  that repel each other with a force given by Coulomb's law. We remark that this problem is only interesting for $s > -2$.
It is known that for the {\it potential-theoretic regime}, $-2<s<2$, we have
\begin{equation} \label{eqn: Energy first order}
\lim_{n \to +\infty} \frac{\mathcal{E}_{s}(n)}{n^2}=\frac{1}{(4 \pi)^2}\int_{\Sphere\times \Sphere}  \frac{1}{|p-q|^s} \,\dd \nu(p)\,\dd \nu(q)=\frac{2^{1-s}}{2-s}.
\end{equation}
 See e.g. \cite{BHS2012}. Consider the difference $\mathcal{E}_s(n)-\frac{2^{1-s}}{2-s}n^2$. Wagner (\cite{WagnerLower} lower bound for $0<s<2$ and upper bound for $-2<s<0$, and \cite{WagnerUpper} upper bound for $0<s<2$ and lower bound for $-2<s<0$) proved that 
\[-c_1 n^{1+s/2} \leq \mathcal{E}_s(n)-\frac{2^{1-s}}{2-s}n^2 \leq -c_2 n^{1+s/2} \quad, \quad -2<s<2\]
where $c_1$ and $c_2$ are positive constants depending only on $s$.  In \cite{RSZ94}, an alternative method which is based on partitioning
$\Sphere$ into regions of equal area and small diameter is used to prove the upper bound in the case $0<s<2$ (and lower bound in the case $-2<s<0$). Let $\eps>0$ be arbitrary, this method gives
\begin{align}
\mathcal{E}_{s}(n) - \frac{2^{1-s}}{2-s}n^2 &\leq -(2 \sqrt{2 \pi})^{-s}(1-\eps)n^{1+s/2} \quad , \quad 0<s<2 \label{Energy: upper bound}   \\  
\mathcal{E}_{s}(n) - \frac{2^{1-s}}{2-s}n^2 &\geq -(2 \sqrt{2 \pi})^{-s}(1+\eps)n^{1+s/2} \quad , \quad -2<s<0 \label{Energy: lower bound} 
\end{align}
for $n \geq n_0(\eps,s)$. We show that the better bounds than (\ref{Energy: upper bound}) and (\ref{Energy: lower bound}) can be obtained by considering the expectation of Riesz $s$-energy of point process $\mathcal{X}^{(n)}$. It is conjectured (see \cite{BHS2012}, Conjecture 3) that the asymptotic expansion of the optimal Riesz $s$-energy for $-2<s<4$, $s\neq 2$ has the form
\begin{equation} \label{conjecture: Riesz Energy}
\mathcal{E}_{s}(n)=\frac{2^{1-s}}{2-s}n^2+\frac{(\sqrt{3}/2)^{s/2} \zeta_{\Lambda_2}(s)}{(4\pi)^{s/2}} n^{1+s/2}+o(n^{1+s/2}) \quad n \to +\infty,
\end{equation}
where $\zeta_{\Lambda_2}(s)$ is the zeta function of the hexagonal lattice $\Lambda_2=\{m(1,0)+n(1/2,\sqrt{3}/2): m, n \in \ZZ\}$.
 See the survey \cite{BHS2012} for more details and further discussion.

In the boundary case $s=2$,  we have (see Theorem 3 in \cite{KS98})
\[\lim_{n \to +\infty} \frac{\mathcal{E}_{2}(n)}{n^2 \log n}=\frac{1}{4}.\]
Also, in \cite{BHS2012} (see Proposition 3 and its remark therein), it is shown that
\begin{equation} \label{Energy: upper bound for s=2}
-\frac{1}{4}n^2+O(n) \leq \mathcal{E}_{2}(n)-\frac{1}{4}n^2 \log n \leq \frac{1}{4} n^2 \log \log n+O(n^2).
\end{equation}
Considering the expectation of Riesz $s$-energy of point process $\mathcal{X}^{(n)}$, we are able to omit the $\log \log n$ term in this  estimate. (See Conjecture 5 in \cite{BHS2012} for  the asymptotic expansion of the optimal Riesz 2-energy.) 
 We remark that the first term of the asymptotics of $\mathcal{E}_s(n)$ for $s>2$ is not known. 

In the following theorem, we give the expectations of the logarithmic energy and Riesz $s$-energy of the random point process $\mathcal{X}^{(n)}$ on $\Sphere$.
\begin{thm} \label{theorem: Riesz energy}
For the point process $\mathcal{X}^{(n)}$ on $\Sphere$, $n \geq 2$, we have

i) (Logarithmic energy)
\begin{equation} \label{thm: Energy part 1}
\E E_{\log}(P_1,\ldots,P_n)=\left(\frac{1}{2}-\log 2\right)n^2-\frac{1}{2} n\log n+\left(\log 2- \frac{\gamma}{2}\right) n-\frac{1}{4}+O\left(\frac{1}{n}\right)
\end{equation}
Here, $\gamma$ is the Euler constant. \\

ii) (Riesz $s$-energy: $s<4$ and $s \neq 2$)
\begin{equation} \label{thm: Energy part 2}
\E E_{s}(P_1,\ldots , P_n)=\frac{2^{1-s}}{2-s}n^2-\frac{\Gamma(n)\Gamma(1-s/2)}{2^{s}\Gamma(n+1-s/2)}n^2
\end{equation}\\[-5mm]

iii) (Riesz $s$-energy: $s=2$) 
\begin{equation} \label{thm: Energy part 3}
\E E_{2}(P_1,\ldots , P_n)=\frac{1}{4}n^2 \log n+\frac{\gamma}{4}n^2-\frac{n}{8}-\frac{1}{48}+O\left(\frac{1}{n^2}\right)
\end{equation}
\end{thm}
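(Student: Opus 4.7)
The plan is to compute each of the three expectations by writing it as an integral against the two-point correlation function $\tilde\rho_{2}$ of $\mathcal{X}^{(n)}$ on $\Sphere$, using rotational invariance to collapse the double integral to a one-dimensional integral in the chord distance, and then performing elementary Beta-function and harmonic-number asymptotics.

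First I would derive $\tilde\rho_{2}$ with respect to $\nu\otimes\nu$. From (\ref{eqn: Determinantal form}) with $K^{(n)}(z,w)=(1+z\bar w)^{n-1}$, the 2-point intensity on $\CC^{2}$ with respect to $\mu\otimes\mu$ equals $(1+|z|^{2})^{n-1}(1+|w|^{2})^{n-1}-|1+z\bar w|^{2(n-1)}$. The stereographic chord identity
\[
1-\frac{|g^{-1}(z)-g^{-1}(w)|^{2}}{4}=\frac{|1+z\bar w|^{2}}{(1+|z|^{2})(1+|w|^{2})}
\]
combined with the Jacobian $d\mu(z)=\frac{n}{4\pi(1+|z|^{2})^{n-1}}d\nu(g^{-1}(z))$ produces, after cancellation of all powers of $(1+|z|^2)$ and $(1+|w|^2)$,
\[
\tilde\rho_{2}(p,q)=\frac{n^{2}}{(4\pi)^{2}}\left[1-\left(1-\frac{|p-q|^{2}}{4}\right)^{n-1}\right].
\]
For any measurable $f$, the defining relation (\ref{eqn: def of rho}) together with rotational invariance (place $q$ at the north pole, integrate out the azimuth) and the substitution $u=|p-q|^{2}/4=\sin^{2}(\theta/2)$ then give the master formula
\[
\E\sum_{i\neq j}f(|P_{i}-P_{j}|)=n^{2}\int_{0}^{1}f(2\sqrt u)\bigl[1-(1-u)^{n-1}\bigr]\,du.
\]

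I now specialize $f$. For part (ii), $f(r)=r^{-s}$ gives $2^{-s}n^{2}\int_{0}^{1}u^{-s/2}[1-(1-u)^{n-1}]\,du$; for $s<2$ this splits into two Beta integrals and evaluates to $\frac{2^{1-s}}{2-s}n^{2}-2^{-s}B(1-s/2,n)n^{2}$, which is exactly (\ref{thm: Energy part 2}). The extension to $2<s<4$ is handled by expanding $1-(1-u)^{n-1}$ in its terminating binomial series (integrable term by term since $u^{1-s/2}$ is integrable near $0$ for $s<4$) and checking algebraically that the resulting rational function of $s$ coincides with the meromorphic continuation of the closed form. For part (iii), $s=2$ reduces to $\int_{0}^{1}u^{-1}[1-(1-u)^{n-1}]\,du=H_{n-1}$, after which the classical expansion $H_{n-1}=\log n+\gamma-\frac{1}{2n}-\frac{1}{12n^{2}}+O(n^{-4})$ delivers (\ref{thm: Energy part 3}). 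For part (i), writing $-\log(2\sqrt u)=-\log 2-\tfrac{1}{2}\log u$, the constant piece contributes $-n^{2}\log 2\,(1-1/n)$, and the remaining $\int_{0}^{1}\log u\,[1-(1-u)^{n-1}]\,du$ is computed by substituting $v=1-u$, expanding $\log(1-v)=-\sum_{k\geq 1}v^{k}/k$, interchanging sum and integral, and telescoping $\sum_{k\geq 1}\frac{1}{k(k+n)}=\frac{H_{n}}{n}$, yielding $-1+H_{n}/n$; inserting $H_{n}=\log n+\gamma+\frac{1}{2n}-\frac{1}{12n^{2}}+O(n^{-4})$ finishes (\ref{thm: Energy part 1}).

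The only non-routine step is the derivation of $\tilde\rho_{2}$: one must combine the determinantal structure on $\CC$, the stereographic chord identity, and the non-trivial background-measure Jacobian in exactly the right way so that all factors $(1+|z|^{2})^{n-1}$ and $(1+|w|^{2})^{n-1}$ cancel and a clean function of the chord distance alone remains. Once that formula is in hand, the three parts are routine Beta-integral or harmonic-number manipulations, and the explicit error terms in parts (i) and (iii) come out automatically from retaining enough terms in the expansion of $H_{n}$.
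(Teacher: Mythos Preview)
Your proposal is correct and follows essentially the same strategy as the paper: derive the two-point correlation on $\Sphere$ (your $\tilde\rho_2$ is the paper's $\mathring\rho_2^{(n)}$ in~(\ref{eqn: Rho2})), use rotational invariance to reduce to a one-dimensional integral, and evaluate via Beta integrals and harmonic-number asymptotics.

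The computational details differ in minor but noteworthy ways. You stay in the chord variable $u=|p-q|^2/4$ on the sphere and obtain the clean master formula $n^2\int_0^1 f(2\sqrt u)[1-(1-u)^{n-1}]\,du$; the paper instead passes back to $\CC$ and integrates in $r=|z|$. For part~(ii) with $s<2$ your direct Beta-integral split is shorter than the paper's route through the binomial expansion~(\ref{eqn: Riesz energy 2}) plus an inductive summation, but for $2<s<4$ the paper's binomial expansion handles the range uniformly while you invoke analytic continuation (which is valid, since both sides are rational in $s$ for fixed $n$). For part~(i) you compute $\int_0^1\log u\,[1-(1-u)^{n-1}]\,du$ directly via the $\log(1-v)$ series and the telescope $\sum_{k\ge 1}\frac{1}{k(k+n)}=H_n/n$; the paper instead differentiates the closed form~(\ref{thm: Energy part 2}) at $s=0$ and appeals to the digamma identity $\Gamma'(n)/\Gamma(n)=-\gamma+H_{n-1}$. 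Both routes land on the same $-\frac{n}{2}H_n$ term, so the asymptotics agree.
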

As a corollary to Theorem \ref{theorem: Riesz energy}, we obtain the following bounds for optimal Riesz $s$-energy. 

\begin{cor} \label{Cor: Riesz energy}
for every $n \geq 2$,
\begin{align} \label{Energy: new upper bound}
\mathcal{E}_{s}(n) - \frac{2^{1-s}}{2-s}n^2 &\leq -\frac{\Gamma(1-s/2)}{2^s} n^{1+s/2} \quad , \quad \,\, 0<s<2 \\ \label{Energy: new lower bound} 
\mathcal{E}_{s}(n) - \frac{2^{1-s}}{2-s}n^2 &\geq -\frac{\Gamma(1-s/2)}{2^s} n^{1+s/2} \quad , \quad -2<s<0 \\ \label{Energy: new upper bound for s=2} 
 \mathcal{E}_{2}(n)-\frac{1}{4}n^2 \log n &\leq \frac{\gamma}{4} n^2 
\end{align}
\end{cor}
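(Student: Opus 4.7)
The corollary is a probabilistic existence argument built directly on Theorem~\ref{theorem: Riesz energy}. Almost surely $\mathcal{X}^{(n)}=\{P_1,\ldots,P_n\}$ is an admissible $n$-point configuration on $\Sphere$, so for $s>0$ (in particular $s=2$) the minimum defining $\mathcal{E}_s(n)$ gives $\mathcal{E}_s(n)\leq E_s(P_1,\ldots,P_n)$ pointwise, and taking expectation yields
\[
\mathcal{E}_s(n)\;\leq\;\E E_s(P_1,\ldots,P_n).
\]
For $-2<s<0$ the inequality reverses because $\mathcal{E}_s(n)$ is a maximum. The whole problem therefore reduces to upper- (resp.\ lower-) bounding the expectations supplied by Theorem~\ref{theorem: Riesz energy}.

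For (\ref{Energy: new upper bound}) and (\ref{Energy: new lower bound}) one compares the Gamma ratio $\frac{\Gamma(n)}{\Gamma(n+1-s/2)}$ with the monomial $n^{s/2-1}$. The relevant estimate is a Wendel-type inequality, a consequence of the log-convexity of $\log\Gamma$: for $0<a<1$ and $x>0$, $\Gamma(x+a)\leq x^{a}\Gamma(x)$, with equality in the limit $x\to\infty$. Applied with $a=1-s/2\in(0,1)$ this immediately handles $0<s<2$ and turns (\ref{thm: Energy part 2}) into $\E E_s\leq\frac{2^{1-s}}{2-s}n^2-\frac{\Gamma(1-s/2)}{2^s}n^{1+s/2}$, which, combined with $\mathcal{E}_s(n)\leq\E E_s$, is exactly (\ref{Energy: new upper bound}). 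For $-2<s<0$, $1-s/2\in(1,2)$; writing $\Gamma(n+1-s/2)=(n-s/2)\,\Gamma(n-s/2)$ and applying the lower half of Wendel to $\Gamma(n-s/2)$ produces the reverse bound $\Gamma(n+1-s/2)\geq n^{1-s/2}\Gamma(n)$ (the sign flips because $n-s/2>n$ with exponent $-s/2>0$), yielding (\ref{Energy: new lower bound}) via $\mathcal{E}_s(n)\geq\E E_s$ in this regime.

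For (\ref{Energy: new upper bound for s=2}) the same probabilistic step reduces matters to showing $\E E_2(P_1,\ldots,P_n)\leq\frac{1}{4}n^2\log n+\frac{\gamma}{4}n^2$ for every $n\geq 2$. Since (\ref{thm: Energy part 3}) is only asymptotic, I would go back to the exact identity that produces (\ref{thm: Energy part 3}) in the proof of Theorem~\ref{theorem: Riesz energy}(iii); by the structure of that computation the exact formula is expected to contain a harmonic sum $H_{n-1}$, which is what produces the $\log n$ asymptotics. The inequality then reduces to the elementary bound $H_{n-1}\leq\log n+\gamma$, valid for every $n\geq 2$ (a direct consequence of the standard expansion $H_m=\log m+\gamma+\frac{1}{2m}-\frac{1}{12m^2}+\cdots$), together with the non-positivity of the remaining finite-$n$ corrections already visible as $-\frac{n}{8}-\frac{1}{48}+O(1/n^2)$ in (\ref{thm: Energy part 3}).

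The only non-trivial ingredient is Wendel's inequality in the correct direction for each of the two sign regimes of $s$; this is classical but requires some care with the shift when $1-s/2>1$. Once that is set up, the corollary is essentially post-processing of Theorem~\ref{theorem: Riesz energy}.
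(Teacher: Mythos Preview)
Your proposal is correct and follows essentially the same route as the paper: the probabilistic existence step $\mathcal{E}_s(n)\lessgtr \E E_s$, followed by Wendel's inequality (including the shift $\Gamma(n+1-s/2)=(n-s/2)\Gamma(n-s/2)$ for $-2<s<0$) to compare $\Gamma(n)/\Gamma(n+1-s/2)$ with $n^{s/2-1}$. For $s=2$ the paper instead controls the $O(1/n^2)$ term in (\ref{thm: Energy part 3}) via a reference, whereas your use of the exact identity $\E E_2=\tfrac{n^2}{4}H_{n-1}$ together with $H_{n-1}\leq \log n+\gamma$ (which follows from the monotone increase of $H_m-\log(m+1)$ to $\gamma$, not merely from the asymptotic expansion) is a slightly cleaner way to the same conclusion.
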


Suppose that $x_1,\ldots,x_n$ are chosen randomly and independently on the sphere, with the uniform distribution. One can easily show that 
\[\E E_{\log}(x_1,\ldots,x_n)=\left(\frac{1}{2}-\log 2\right)n^2- \left(\frac{1}{2}-\log 2\right) n\]
and also for $s<2$ (see (\ref{eqn: Energy first order}))
\[\E E_{s}(x_1,\ldots,x_n)=\frac{2^{1-s}}{2-s}n^2-\frac{2^{1-s}}{2-s} n.\]

\subsubsection*{{\bf Minimum spacing}}
Define the minimum spacing by
\[m_n:=\min_{i \neq j} |P_i-P_j|.\]
We are interested in the asymptotic behavior of $m_n$ as $n$ tends to infinity.
For $0<t<2$, set
\begin{equation} \label{def: point pair statistics}
G_{t,n}:=\sum_{i<j} 1_{\{|P_i-P_j| \leq t\}}
\end{equation}
to be the number of non-ordered pairs of distinct points at Euclidean distance at most $t$ apart. Our result concerning distribution of $G_{t,n}$ is the following.
\begin{thm} \label{thm: Minimum spacing}
Let $G_{t,n}$ defined by (\ref{def: point pair statistics}) and assume that $t=\frac{x}{n^{3/4}}$ then $G_{t,n}$ converges in distribution to the Poisson random variable
with mean $\frac{x^4}{64}$.
\end{thm}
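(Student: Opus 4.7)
The plan is to apply the method of factorial moments: show $\E[(G_{t,n})_k]\to \lambda^k$ for every integer $k\ge 1$ with $\lambda=x^4/64$, and conclude via the classical result that such convergence implies convergence in distribution to $\mathrm{Poisson}(\lambda)$.

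First compute the mean. By rotation invariance and the explicit form of $K^{(n)}$ transported to $\Sphere$ via stereographic projection, the two-point correlation with respect to surface measure is
\[\tilde\rho_2(p,q)=\frac{n^2}{16\pi^2}\bigl[1-(1-|p-q|^2/4)^{n-1}\bigr].\]
Using the cap-area identity $\nu(\{q:|p-q|\le r\})=\pi r^2$ one obtains
\[\E[G_{t,n}]=\frac{n^2}{2}\int_0^{t^2/4}\bigl[1-(1-u)^{n-1}\bigr]\,du.\]
Plugging $t=x/n^{3/4}$ and expanding $(1-u)^{n-1}=1-(n-1)u+O((nu)^2)$ throughout $u\le x^2/(4n^{3/2})$ yields $\E[G_{t,n}]\to x^4/64=\lambda$.

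For the higher factorial moments, decompose $\E[(G_{t,n})_k]=A_k+B_k$, where $A_k$ comes from ordered $k$-tuples of distinct pairs whose $2k$ indices are all distinct and $B_k$ from tuples of pairs sharing an index. By the determinantal formula and symmetry,
\[A_k=\frac{1}{2^k}\int \prod_{l=1}^k 1_{|p_{2l-1}-p_{2l}|\le t}\,\det\bigl[\tilde K(p_a,p_b)\bigr]_{a,b=1}^{2k}\,d\nu^{\otimes 2k},\]
where $\tilde K$ is the rank-$n$ projection correlation kernel on $\Sphere$ satisfying $\tilde K(p,p)=n/(4\pi)$, $|\tilde K(p,q)|^2=(n/(4\pi))^2(1-|p-q|^2/4)^{n-1}$, and $\int \tilde K(p,q)\tilde K(q,r)\,d\nu(q)=\tilde K(p,r)$. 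To analyze $A_k$, split the integration domain by a threshold $\delta=c\sqrt{\log n/n}$ on inter-pair distances. On the ``far'' region (all inter-pair distances $>\delta$) the off-diagonal entries between different pairs satisfy $|\tilde K|^2\le (n/(4\pi))^2e^{-(n-1)\delta^2/4}=O(n^{-c^2/4})$, so the $2k\times 2k$ determinant block-factorizes as $\prod_l \tilde\rho_2(p_{2l-1},p_{2l})+o(1)$ uniformly; integrating and dividing by $2^k$ gives $\E[G_{t,n}]^k\to \lambda^k$. On the complementary ``close'' region all $2k$ points cluster within diameter $O(\delta)$, so Taylor expansion of the kernel around a coincidence together with the projection property yield the Vandermonde-type repulsion $|\det[\tilde K(p_i,p_j)]|\lesssim n^{k(2k+1)}\prod_{i<j}|p_i-p_j|^2$; combining this with the constraints $|p_{2l-1}-p_{2l}|\le t$ and the volume of the close region, a straightforward power count shows that the contribution is $o(1)$.

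The shared-index terms $B_k$ are handled by the same repulsion: any shared index forces at least three points to lie within distance $O(t)$ of each other, and the lower-point correlations vanish there with order $\prod_{i<j}|p_i-p_j|^2$, which easily overcomes the polynomial growth $n^\alpha$ with $t=x/n^{3/4}$. Combining the two estimates gives $\E[(G_{t,n})_k]\to\lambda^k$ for every $k\ge 1$, and the method of moments delivers the Poisson limit.

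The principal obstacle is the ``close'' region in $A_k$: a term-by-term bound on the non-pair-preserving permutations is too crude (already for $k=2$ the twenty non-pair-preserving permutations in $S_4$ each contribute at the same order as the main term, and only the full signed sum cancels), so the argument must work with the determinant as a whole and exploit the Vandermonde vanishing at coincidences that is inherent to the projection structure of $\tilde K$. Quantifying this cancellation uniformly over permutation structures, using the Gaussian-type decay of $|\tilde K|$ and the reproducing identity $\int \tilde K \tilde K\,d\nu=\tilde K$, is the technical heart of the proof.
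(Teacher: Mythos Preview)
Your overall strategy---direct factorial moments of $G_{t,n}$ with a far/close dichotomy---is sound and genuinely different from the paper's route. The paper instead introduces (after Soshnikov) a \emph{modified} point process $\tilde{\mathcal X}^{(n)}$: from each isolated close pair one of the two points is selected at random, and $Z_n:=\tilde{\mathcal X}^{(n)}(\Sphere)$ is the resulting count. A short $L^1$ estimate using only Hadamard--Fischer on three points gives $G_{t,n}-Z_n\to 0$, and the factorial moments of $Z_n$ are then obtained from the correlation functions $\tilde\rho_k^{(n)}$ of the modified process via a clean inclusion--exclusion identity in terms of $\mathring\rho_{2k+m}^{(n)}$. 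Pointwise convergence of $\tilde\rho_k^{(n)}$ (for fixed distinct $p_1,\dots,p_k$, eventually $|p_i-p_j|>4t$, so the off-diagonal $2\times2$ blocks are exponentially small) plus the uniform Hadamard--Fischer bound $\tilde\rho_k^{(n)}\le (x^4/128\pi)^k$ finish the proof by dominated convergence. The modified-process device completely sidesteps the close-pair difficulty you correctly flag as the principal obstacle.

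In your direct approach there are two points to repair. First, the description of the complementary region is inaccurate: the complement of ``all inter-pair distances exceed $\delta$'' is ``\emph{some} inter-pair distance is $\le\delta$,'' not ``all $2k$ points cluster within diameter $O(\delta)$''; a further cluster decomposition is needed. Second, and more importantly, you leave the Vandermonde-type bound $\mathring\rho_{2k}\lesssim n^{k(2k+1)}\prod_{i<j}|p_i-p_j|^2$ as an unproved black box and call its proof the ``technical heart.'' You are right that a permutation-by-permutation estimate fails in the close region, but the full Vandermonde cancellation is not actually needed: the Hadamard--Fischer inequality applied to the pair partition already gives $\mathring\rho_{2k}\le\prod_{l=1}^k\mathring\rho_2(p_{2l-1},p_{2l})$, and on the event $|p_1-p_3|\le\delta$ (say) this bound, integrated with the intra-pair constraints, is $O\bigl((\E G_{t,n})^k\,\delta^2\bigr)=O(\log n/n)$, since by rotation invariance the extra close constraint contributes exactly one additional cap-volume factor $\pi\delta^2$. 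Summing over the $O(k^2)$ possible close inter-pair constraints still gives $o(1)$. The same Hadamard--Fischer argument disposes of your $B_k$ terms (this is essentially the paper's Lemma~\ref{lem: 3 close points}). So your sketch can be completed, but with a much lighter tool than the one you propose; the paper's device avoids the issue altogether.
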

The proof of this theorem is given in Section 5. As a consequence, since $\Prob(G_{t,n}=0)=\Prob(m_n>t)$, Theorem \ref{thm: Minimum spacing} clearly implies that
\begin{cor}
For any $x>0$,
\[\lim_{n \to +\infty} \Prob(n^{3/4}m_n >x)=\exp\left(\frac{-x^4}{64}\right).\]
\end{cor}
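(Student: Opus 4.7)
The plan is to apply the method of factorial moments: since a Poisson$(\lambda)$ random variable is characterized by its factorial moments $\lambda^k$, it suffices to show that for every fixed integer $k\ge 1$,
\[ \E\bigl[G_{t,n}(G_{t,n}-1)\cdots(G_{t,n}-k+1)\bigr] \longrightarrow (x^4/64)^k. \]
The determinantal structure of $\mathcal{X}^{(n)}$ allows these factorial moments to be expressed in terms of integrals of the correlation functions $\rho_{2m}$, $m\le k$, against indicator functions of the pair-proximity regions, so the entire proof reduces to asymptotic integral estimates.

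The first moment computation sets the template. Using stereographic projection together with the identity $(1+|z|^2)(1+|w|^2)-|1+z\bar w|^2=|z-w|^2$ and $|g^{-1}(z)-g^{-1}(w)|^2=4|z-w|^2/[(1+|z|^2)(1+|w|^2)]$, one obtains the rotation-invariant two-point intensity on $\Sphere$,
\[ \rho_2(p,q) = \frac{n^2}{16\pi^2}\bigl[1-(1-|p-q|^2/4)^{n-1}\bigr]. \]
Polar integration with $a:=t^2/4$ then gives $\E[G_{t,n}] = \frac{n^2}{2}\bigl[a-\frac{1-(1-a)^n}{n}\bigr]$; since $t=x n^{-3/4}$ makes $na\to 0$, a Taylor expansion of $(1-a)^n$ through order $a^3$ yields $\E[G_{t,n}] = n^2(n-1)a^2/4 + O(n^4 a^3) = x^4/64 + o(1)$.

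For the $k$-th factorial moment, expand
\[ \E[G_{t,n}^{(k)}] = \sum \Prob\bigl(|P_{i_l}-P_{j_l}|\le t,\, l=1,\dots,k\bigr), \]
where the sum runs over ordered $k$-tuples of distinct unordered pairs. Split this into \emph{simple} tuples (all $2k$ indices distinct) and tuples with shared vertices. For the simple part the determinantal formula writes the summand as an integral of $\det[K(p_i,p_j)]_{2k\times 2k}$ over the constraint region. The key quantitative estimate is the off-diagonal decay
\[ \frac{|K(p,q)|^2}{K(p,p)K(q,q)} = \bigl(1-|p-q|^2/4\bigr)^{n-1}\le e^{-(n-1)|p-q|^2/4}, \]
which is exponentially small on the scale $|p-q|\gg n^{-1/2}$. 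On the subregion where the $k$ pair-centers are pairwise separated by at least $r_n:=n^{-1/2}\log n$, the $2k\times 2k$ kernel matrix is block-diagonal up to super-polynomially small corrections, so the determinant factorizes as $\prod_{l=1}^{k}\rho_2(p_l,q_l)$, and this part of the integral converges to $(x^4/64)^k$ by the first-moment calculation applied $k$ times.

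It remains to show that the two error sources vanish: (i) the ``clustered'' region of the simple sum, where two of the $k$ disjoint pairs lie within $r_n$ of each other, and (ii) the shared-vertex sum, indexed by a pair graph on $m<2k$ distinct vertices. In both cases at least three points of $\mathcal{X}^{(n)}$ lie inside a set of diameter $O(r_n+t)=o(n^{-1/2})$, so the normalized Gram structure of the kernel restricted to such a cluster forces $\det M = O(n^{-(m-1)/2})$; combined with the Lebesgue volume of the configuration, which is bounded by a spanning-tree estimate of order $O((r_n+t)^{2(m-1)})$, each such contribution is $O(n^{-\delta})$ for some $\delta>0$, and there are only finitely many graph types to sum. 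The principal obstacle is the quantitative block-diagonal approximation of $\det[K(p_i,p_j)]$ in the well-separated regime: the kernel has a natural Gaussian scale $n^{-1/2}$, while the pair-proximity constraint imposes the much finer scale $t\sim n^{-3/4}$, and these two scales must be decoupled carefully with error constants tracked uniformly in $k$.
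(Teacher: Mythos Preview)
Your overall strategy is sound and genuinely different from the paper's. Both routes reduce the corollary to Theorem~\ref{thm: Minimum spacing} (since $\{m_n>t\}=\{G_{t,n}=0\}$) and both ultimately use factorial moments, but the paper introduces an auxiliary thinned process $\tilde{\mathcal X}^{(n)}$ (selecting one point from each isolated close pair), shows $G_{t,n}-\tilde{\mathcal X}^{(n)}(\Sphere)\to 0$ in $L^1$, and then computes the factorial moments of the thinned count via an inclusion--exclusion formula for $\tilde\rho_k$. You instead attack the factorial moments of $G_{t,n}$ directly through a graph decomposition of ordered $k$-tuples of pairs. The paper's device trades your shared-vertex combinatorics for one extra lemma; your route avoids that lemma but must control non-matching graphs by hand.

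There is, however, a real gap in your error estimate. First, the assertion $r_n+t=o(n^{-1/2})$ is false: with $r_n=n^{-1/2}\log n$ one has $r_n+t\sim n^{-1/2}\log n\gg n^{-1/2}$. Second, and more seriously, the claimed bound $\det M=O(n^{-(m-1)/2})$ from ``normalized Gram structure'' does not follow. The phase of $\mathring K^{(n)}(p,q)$ is $(n-1)\arg(1+z\bar w)$, which for $|p-q|\sim n^{-3/4}$ is of order $n^{1/4}\to\infty$; hence the normalized Gram matrix is \emph{not} a small perturbation of the rank-one all-ones matrix, and the near-rank-one heuristic fails. The clean fix---and this is exactly what the paper exploits---is the Hadamard--Fischer inequality applied along the close pairs: on the constraint region one has $\mathring\rho_{2k}\le\prod_{l=1}^{k}\mathring\rho_2(p_l,q_l)=O(n^{3k/2})$, and for a shared-vertex component on $m_i\ge 3$ vertices one pairs as many vertices as possible to get $\mathring\rho_{m_i}=O(n^{3\lfloor m_i/2\rfloor/2+ (m_i\bmod 2)})$. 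Combining these with a spanning-tree volume bound that distinguishes edges of length $\le t$ from the single clustering edge of length $\le r_n$ gives, for the clustered simple region, a contribution $O(n^{3k/2})\cdot O(t^{2k}r_n^2)=O(n^{-1}(\log n)^2)$, and for each non-matching graph a contribution $O(n^{-1/2})$. With this correction your argument goes through.
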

Suppose that $x_1,\ldots,x_n$ are chosen randomly and independently on the sphere, with the uniform distribution.
It was shown that (see Theorem 2 of \cite{Randomsphere})
\[\lim_{n \to +\infty} \Prob(n \min_{i \neq j} |x_i-x_j|>x)=\exp\left(\frac{-x^2}{8}\right).\]

\section{{\bf Discrepancy}}
Let $D$ be a spherical cap on the sphere $\Sphere$. Define 
\[N_{D}:=\mathcal{X}^{(n)}(D),\]
the number of points of $\mathcal{X}^{(n)}$ in $D$. Clearly, the expected value of $N_D$ is equal to $\frac{n|D|}{4\pi}$. In order to prove Theorem \ref{thm: Discrepancy} we need to control the variance of $N_D$. The following lemma gives the asymptotic behavior of the variance.
\begin{lem}[] \label{lem:variance}
 Let $D$ be a spherical cap on the sphere (depending on $n$) such that $\frac{1}{|D|},\frac{1}{|D^c|}=o(n)$. Then for any $\eps>0$
\begin{equation} \label{appro: variance}
\Var [N_D]=\frac{\sqrt{n}}{4\pi \sqrt{\pi}}\sqrt{|D||D^c|}+ o\left(\log^{\frac{1}{2}+\eps} (n|D||D^c|)\right)
\end{equation} 
where $|D|$ denotes the area of $D$ and $D^c=\Sphere-D$.
\end{lem}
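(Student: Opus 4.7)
The plan is to exploit the determinantal structure of $\mathcal{X}^{(n)}$. By the general identity for determinantal point processes,
\[
\Var[N_D] = \int_D \int_{D^c} |\mathcal{K}^{(n)}(P,Q)|^2 \, \dd\nu(P)\,\dd\nu(Q),
\]
together with the geometric identity $(1-|P-Q|^2/4) = (1+P\cdot Q)/2$ on $\Sphere$ and Krishnapur's kernel pushed forward via the stereographic projection, one finds $|\mathcal{K}^{(n)}(P,Q)|^2 = \bigl(\tfrac{n}{4\pi}\bigr)^2 \bigl(\tfrac{1+P\cdot Q}{2}\bigr)^{n-1}$. Hence the problem reduces to the asymptotic analysis of
\[
I_n := \Bigl(\frac{n}{4\pi}\Bigr)^2 \int_D\int_{D^c}\Bigl(\frac{1+P\cdot Q}{2}\Bigr)^{n-1}\dd\nu(P)\,\dd\nu(Q).
\]

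By rotational invariance I may take $D = \{P \in \Sphere : P \cdot e_3 \geq \cos\phi_0\}$, so that $|D| = 2\pi(1-\cos\phi_0)$, $|D^c|=2\pi(1+\cos\phi_0)$, and $\sqrt{|D||D^c|}=2\pi \sin\phi_0$. In spherical coordinates, one has
\[
\frac{1+P\cdot Q}{2}=\cos^2\tfrac{\phi_1-\phi_2}{2}-\sin\phi_1\sin\phi_2\sin^2\tfrac{\theta_1-\theta_2}{2};
\]
since the integrand depends only on $\theta_1-\theta_2$, integrating out the sum direction produces a factor $2\pi$, giving
\[
I_n=\frac{n^2}{8\pi}\int_0^{\phi_0}\sin\phi_1\,\dd\phi_1\int_{\phi_0}^{\pi}\sin\phi_2\,\dd\phi_2\int_0^{2\pi}(a+b\cos\theta)^{n-1}\,\dd\theta,
\]
with $a=(1+\cos\phi_1\cos\phi_2)/2$ and $b=\sin\phi_1\sin\phi_2/2$.

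Next I would apply Laplace's method. The $\theta$-integral peaks at $\theta=0$, where $a+b=\cos^2((\phi_1-\phi_2)/2)$, and a standard saddle-point estimate gives
\[
\int_0^{2\pi}(a+b\cos\theta)^{n-1}\,\dd\theta=(a+b)^{n-1}\sqrt{\frac{2\pi(a+b)}{(n-1)b}}\,(1+o(1))
\]
uniformly in the relevant range. Substituting $\phi_1=\phi_0-u$, $\phi_2=\phi_0+v$ with $u,v\geq 0$, the factor $(a+b)^{n-1}=\cos^{2(n-1)}((u+v)/2)\approx e^{-(n-1)(u+v)^2/4}$ confines the effective integration to $u+v=O(n^{-1/2})$. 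On this region $\sin\phi_i=\sin\phi_0+O(n^{-1/2})$, $a+b=1+O(n^{-1})$, and
\[
\int_0^{\infty}\!\int_0^{\infty}e^{-(n-1)(u+v)^2/4}\,\dd u\,\dd v=\frac{2}{n-1}.
\]
Collecting the constants reproduces the leading term $\frac{\sqrt{n}}{4\pi\sqrt{\pi}}\sqrt{|D||D^c|}$.

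The main obstacle is uniform control of the error. The hypothesis $\tfrac{1}{|D|},\tfrac{1}{|D^c|}=o(n)$ is precisely $n\sin^2\phi_0\to\infty$, ensuring that the Gaussian width $(n\sin^2\phi_0)^{-1/2}$ is negligible compared with $\sin\phi_0$ and so legitimizing the local approximations above. To capture the stated error $o(\log^{1/2+\eps}(n|D||D^c|))$, I would split the outer integral into a ``Gaussian core'' $u+v\leq C\sqrt{\log(n|D||D^c|)/n}$, where the higher-order Taylor corrections in $\cos((u+v)/2)$ and $\sin\phi_i$, together with the subleading saddle-point corrections to the $\theta$-integral, contribute an absolute error of the required order, and a tail $u+v>C\sqrt{\log(n|D||D^c|)/n}$ on which the Gaussian weight is polynomially small in $n|D||D^c|$. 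Careful calibration of $C$ against $\eps$ produces the logarithmic factor in the error.
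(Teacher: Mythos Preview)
Your approach is correct in outline and genuinely different from the paper's. You work directly with the variance identity
\[
\Var[N_D]=\int_D\int_{D^c}|\mathring{K}^{(n)}(p,q)|^2\,\dd\nu(p)\,\dd\nu(q)
\]
(valid because the kernel is a projection) and extract the asymptotics by Laplace's method; the leading-order computation you sketch does reproduce $\tfrac{\sqrt{n}}{4\pi\sqrt{\pi}}\sqrt{|D||D^c|}$. The paper instead exploits a structural feature specific to radially symmetric determinantal ensembles: after stereographic projection, the set of squared moduli $\{|\lambda_k|^2\}$ has the law of independent beta-prime variables, so $N_D$ equals in law a sum of independent Bernoullis with success probabilities $\Prob(S_n>k)$ for a binomial $S_n$. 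This yields
\[
\Var[N_D]=\sum_{k=0}^{n-1}\Prob(S_n\le k)\,\Prob(S_n>k),
\]
and the asymptotics then follow from Bernstein's inequality and the Berry--Ess\'een bound, which control the error essentially for free at the precision $o(\log^{1/2+\eps}\sigma)$.

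What each buys: your integral-kernel route is more generic (it would apply to any projection DPP on a surface once the kernel is known) and conceptually transparent about why the answer scales with the boundary length $\sqrt{|D||D^c|}$. The paper's Bernoulli decomposition is less general but gives much tighter and more uniform error control with almost no analytic effort, and---crucially in this paper---the same decomposition is reused verbatim for the concentration estimate in Theorem~\ref{thm: Discrepancy}, for the hole probability in Proposition~\ref{prop: Hole probability}, and for the nearest-neighbour statistics, so it is doing structural work well beyond this single lemma. In your route the error analysis is only sketched: the uniform Laplace estimate for the $\theta$-integral when $b=\tfrac12\sin\phi_1\sin\phi_2$ can be very small (near the poles), the boundary effects when $u\uparrow\phi_0$ or $v\uparrow\pi-\phi_0$, and the calibration of the core/tail split to land exactly at $o(\log^{1/2+\eps}(n|D||D^c|))$ all require nontrivial bookkeeping that you have not carried out. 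None of these is a fatal obstruction, but turning your outline into a proof at the stated precision is substantially more work than the paper's argument.
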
   

\begin{proof}

The distribution of $N_D$ is invariant under isometries of the sphere, so we may assume without loss of generality that $g(D)$ is a disk centred
 at the origin with radius $r$. From \cite{GAF-Survey}, Theorem 26, the set $\{|\lambda_k|^2: 1 \leq k \leq n\}$ has the same distribution as 
 set $\{Q_k:0\leq k\leq n-1\}$ where the random variables $Q_k$ are jointly independent and $Q_k$ for $0\leq k\leq n-1$ has density
\[\frac{n\binom{n-1}{k}q^k}{(1+q)^{n+1}} \quad , \quad q\geq 0\]
(Note that $K(z,w)=\sum_{k=0}^{n-1} \binom{n-1}{k}(z\bar{w})^k$ and $\mu$ has density $\phi(|z|)$ where $\phi(x)=\frac{n}{\pi (1+x^2)^{n-1}}$).  
Thus $Q_k$ has the beta prime distribution with parameters $k+1$ and $n-k$, i.e.,  $\frac{Q_k}{1+Q_k}$  has the beta distribution with parameters 
as before. 

Thus we have
\begin{equation} \label{dist0: N_D}
N_D \dist |\{k: 0 \leq k \leq n-1 \, , \, Q_k<r^2\}|.
\end{equation}

And so by the independence of $Q_k$, $0 \leq k \leq n-1$, we deduce that
\begin{equation} \label{dist1: N_D}
N_D \dist \eta_0+\eta_1+\dots+\eta_{n-1}
\end{equation}   
where the random variables $\eta_k$ are independent and distributed as Bernoulli variables with $\Prob(\eta_k=1)=\Prob(Q_k<r^2)$.  
By the properties of incomplete beta function (see e.g. \cite{beta}) we have 
\begin{equation*}
\Prob(Q_k<r^2)=\int_{0}^{r^2} \frac{n\binom{n-1}{k}q^k}{(1+q)^{n+1}}\,\mathrm{d}q
=\frac{\sum_{j=k+1}^{n}\binom{n}{j} r^{2j}}{(1+r^2)^n}
\end{equation*}
 Now suppose $B_1,\ldots,B_n$ are i.i.d. Bernoulli random variables with parameter $\alpha:=\frac{r^2}{1+r^2}$.
 Let $S_n:=B_1+\dots+B_n$. One can write the right-hand side of above equation as $\Prob(S_n>k)$. Therefore, we obtain for every $0 \leq k \leq n-1$
\begin{equation} \label{eqn: Bernoulli}
\Prob(\eta_k=1)=\Prob(S_n>k).
\end{equation}
We also note that
\begin{equation*}
|D|=\frac{4\pi r^2}{1+r^2}=4\pi \alpha .
\end{equation*}
Now from (\ref{dist1: N_D}) and (\ref{eqn: Bernoulli}) it follows that for every spherical cap $D$,
\begin{equation} \label{eqn: variance}
\Var[ N_D]=\sum_{k=0}^{n-1} \Prob(S_n \leq k) \Prob(S_n>k)
\end{equation}
where $S_n$ is a binomial random variable with parameters $n$ and $\frac{|D|}{4\pi}$.

Fix $\eps>0$. Let 
\[\sigma=\sqrt{\Var[S_n]}=\sqrt{n\alpha(1-\alpha)}\] and 
\[M=\sigma (\log \sigma)^{\frac{1}{2}+\eps}.\]
Since $\Var[N_D]=\Var[N_{D^c}]$, by symmetry, we may assume that $\alpha \leq 1/2$. 
Using the Bernstein's inequality we see that for any $t>0$, one has
\begin{equation} \label{inequality: Bernstein}
\Prob (|S_n-n\alpha|>t ) \leq 2\exp\left(-\min\left(\frac{t^2}{4\sigma^2},\frac{t}{4}\right)\right)
\end{equation}
(see for example Lemma 2 in \cite{Beck2}).
 We can use this to show that
\begin{align} \label{estimate: Chernoff}
&\sum_{|k-n\alpha |\geq M} \Prob(S_n \leq k) \Prob(S_n>k) \leq \\
 &2 \left[ \sum_{M \leq j \leq \sigma^2} \exp\left(-\frac{j^2}{4\sigma^2}\right)+\sum_{\sigma^2 <j\leq n(1-\alpha)} \exp(-j/4) \right]. \nonumber
\end{align}
The assumption of the lemma implies that $\sigma \to +\infty$ as $n \to +\infty$ and thus we have
\begin{equation} \label{appro2: variance}
\sum_{\sigma^2 <j\leq n(1-\alpha)} \exp(-j/4)=o(1).
\end{equation}
Also, by comparing the integral of $\exp\left(\frac{-x^2}{4}\right)$ with its Riemann sum, we conclude that
 \begin{align*}
 &\sum_{M+1 \leq j \leq \sigma^2} \exp\left(-\frac{j^2}{4\sigma^2}\right) \leq \sigma \int_{(\log \sigma)^{1/2+\eps}}^{+\infty} \exp\left(\frac{-x^2}{4}\right)\,\dd x \\ 
 &= \sigma \int_{(\log \sigma)^{1+2\eps}}^{+\infty} \frac{1}{2\sqrt{x}} \exp(-x/4)\,\dd x \\
 &\leq \frac{2\sigma}{(\log \sigma)^{1/2+\eps}} \exp\left(-\frac{1}{4}(\log \sigma)^{1+2\eps}\right)=o(1).
 \end{align*}
Thus, from (\ref{estimate: Chernoff}), (\ref{appro2: variance}) and above estimate, we obtain 
\[\sum_{|k-n\alpha|\geq M} \Prob(S_n \leq k) \Prob(S_n>k)=o(1).\]
Moreover, from the Berry-Ess\'een theorem we see that for some absolute constant $C$, one has (see \cite{Berry}) 
\[\left|\Prob(S_n \leq k)-\Phi\left(\frac{k-n\alpha}{\sigma}\right)\right|\leq \frac{C}{\sigma}\]
where $\Phi$ is the cumulative distribution function of the standard normal distribution.
From (\ref{eqn: variance}), and by using the above two estimates, we conclude that
\[\Var[ N_D]=\sum_{|j| < M}   
\Phi\left(\frac{j}{\sigma}\right)\Phi\left(\frac{-j}{\sigma}\right) + o((\log \sigma)^{1/2+2\eps}).\] 
By considering the Riemann sum of $\Phi(x)\Phi(-x)$, we see that
\[\left|\sum_{|j|<M} \Phi\left(\frac{j}{\sigma}\right)\Phi\left(\frac{-j}{\sigma}\right)-
\sigma \int_{-(\log \sigma)^{1/2+\eps}}^{(\log \sigma)^{1/2+\eps}} \Phi(x)\Phi(-x) \,\dd x \right|= O(1).\]
(The difference in the left-hand side of above equation is less than the total variation of the integrand, possibly plus a constant.)

Using the standard bound 
\[\Phi(-x)=\frac{1}{\sqrt{2\pi}}\int_x^{+\infty} e^{-t^2/2}\,\dd t=\frac{1}{\sqrt{2\pi}}\int_{x^2}^{+\infty} \frac{1}{2\sqrt{t}} e^{-t/2}\,\dd t< \frac{1}{\sqrt{2\pi}x}e^{\frac{-x^2}{2}}\]
for any $x>0$, we have
\begin{align*}
\int_{ \{x:|x|>(\log \sigma)^{1/2+\eps}\} } \Phi(x)\Phi(-x) \,\dd x &\leq 
2\int_{(\log \sigma)^{1/2+\eps}}^{+\infty} \Phi(-x) \,\dd x \\ 
&\leq \frac{2}{(\log \sigma)^{1/2+\eps}} \Phi(-(\log \sigma)^{1/2+\eps})\\
&=o(1/\sigma).
\end{align*}
Combining all these facts, we deduce that
\[\Var[ N_D]=\sigma \int_{-\infty}^{+\infty} \Phi(x)\Phi(-x)\,\dd x+ o((\log \sigma)^{1/2+2\eps}).\]
On the other hand, from integration by parts, we have
\[\int_{-\infty}^{+\infty} \Phi(x)\Phi(-x)\,\dd x=\frac{1}{\sqrt{\pi}}\]
and (\ref{appro: variance}) follows.
\end{proof}

\begin{rem} 
Clearly, the restriction of point process $\mathcal{X}^{(n)}$ to a spherical cap $D$ is also a determinantal point process with kernel  $1_{g(D)}(z)K^{(n)}(z,w)1_{g(D)}(w)$. Denote by $\mathscr{K}^{(n)}_{g(D)}$ the integral operator in $L^2(g(D))$ obtained by considering this kernel. Then the distribution of $N_D$ is the sum of independent Bernoulli random variables, whose expectations are the eigenvalues of $\mathscr{K}^{(n)}_{g(D)}$ (see e.g. \cite{AGZ}, Corollary 4.2.24). So from (\ref{dist1: N_D}) we conclude that the non-zero eigenvalues of operator $\mathscr{K}^{(n)}_{g(D)}$ are equal to $\Prob(S_n >k)$, $0 \leq k \leq n-1$, where $S_n$ is a binomial random variable with parameters $n$ and $\frac{|D|}{4\pi}$.
\end{rem}
From Lemma \ref{lem:variance} and the general central limit theorem for determinantal point processes \cite{Soshnikov} (due to Costin and Lebowitz \cite{Costin} in case of the sine kernel) we have the following theorem.
\begin{thm}
Let $D$ be a spherical cap on $\Sphere$ (depending on $n$) such that $\frac{1}{|D|},\frac{1}{|D^c|}=o(n)$. Then
\begin{equation*}
\frac{N_D-\frac{n|D|}{4\pi}}{\frac{1}{2} \pi^{-3/4} n^{1/4} \sqrt[4]{|D||D^c|}} \todist N(0,1)
\end{equation*}
as $n \to +\infty$.
\end{thm}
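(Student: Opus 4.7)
The plan is to combine the independent-Bernoulli representation of $N_D$ obtained in the proof of Lemma \ref{lem:variance} with a classical central limit theorem, and to read off the normalization from the variance asymptotic of that same lemma. The scaling in the denominator of the theorem is exactly $\sqrt{\Var[N_D]}$, since from Lemma \ref{lem:variance}
\[\sqrt{\Var[N_D]} \,\sim\, \sqrt{\tfrac{1}{4\pi^{3/2}}\sqrt{n|D||D^c|}} \,=\, \tfrac{1}{2}\pi^{-3/4}\, n^{1/4}\,\sqrt[4]{|D||D^c|},\]
so once a CLT is in place, Slutsky delivers the stated statement.

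First I would verify that $\Var[N_D]\to\infty$ under the hypothesis. Since $|D|+|D^c|=4\pi$, one of $|D|,|D^c|$ is at least $2\pi$; combined with $n|D|\to\infty$ and $n|D^c|\to\infty$ (which is exactly $1/|D|,1/|D^c|=o(n)$) this gives $n|D||D^c|\to\infty$ and hence, by Lemma \ref{lem:variance}, $\Var[N_D]\to\infty$.

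Next I would invoke the decomposition $N_D \dist \eta_0+\cdots+\eta_{n-1}$ from \eqref{dist1: N_D} (equivalently, from the Remark: the determinantal structure of $\mathcal{X}^{(n)}$ restricted to $D$ makes the counting random variable a sum of independent Bernoullis whose parameters are the eigenvalues of $\mathscr{K}^{(n)}_{g(D)}$). From here one may cite the general Soshnikov--Costin--Lebowitz CLT for determinantal processes, as the theorem statement indicates; alternatively a self-contained one-line Lyapunov argument suffices, since $|\eta_k-\E\eta_k|\leq 1$ gives
\[\sum_{k=0}^{n-1}\E|\eta_k-\E\eta_k|^{3}\,\leq\,\sum_{k=0}^{n-1}\Var(\eta_k)\,=\,\Var[N_D],\]
so the Lyapunov ratio is bounded by $\Var[N_D]^{-1/2}\to 0$. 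Thus $(N_D-\E N_D)/\sqrt{\Var[N_D]}\todist N(0,1)$, and Slutsky finishes the proof.

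The main obstacle has already been overcome in the variance computation of Lemma \ref{lem:variance}: once both the independent-Bernoulli diagonalization and the sharp variance asymptotic are available, the step to the CLT is essentially mechanical. The only place where one might wish to be careful is verifying that the regime $1/|D|,1/|D^c|=o(n)$ is indeed the right range where both $\sigma=\sqrt{n\alpha(1-\alpha)}\to\infty$ (so that Lemma \ref{lem:variance} applies and its error term is negligible against $\sigma$) and where the Bernoulli sum has enough ``effective summands'' for Lyapunov to bite; both are transparent.
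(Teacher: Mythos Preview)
Your proposal is correct and follows essentially the same route as the paper: the paper simply combines Lemma~\ref{lem:variance} with the Soshnikov--Costin--Lebowitz central limit theorem for determinantal processes, which is exactly one of the two options you outline. Your additional self-contained Lyapunov verification via the Bernoulli decomposition~\eqref{dist1: N_D} is a welcome elaboration (the paper does not spell it out), and your Slutsky step making explicit why the denominator in the statement matches $\sqrt{\Var[N_D]}$ is also correct.
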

Next we prove Theorem \ref{thm: Discrepancy}.
\begin{proof}[Proof of Theorem \ref{thm: Discrepancy}]
We use the notation from the proof of Lemma \ref{lem:variance}. As we have seen before
\[\Var[N_D]=\sum_{k=0}^{n-1} \Prob(S_n \leq k) \Prob(S_n>k).\]
Similar to inequality (\ref{estimate: Chernoff}), we have
\begin{align*}
&\sum_{0 \leq k \leq n-1} \Prob(S_n \leq k) \Prob(S_n>k) \leq \\
 &2 \left[ \sum_{0 \leq j \leq \sigma^2} \exp\left(-\frac{j^2}{4\sigma^2}\right)+\sum_{j>\sigma^2 } \exp(-j/4) \right].
\end{align*}
Also, we see
\[\sum_{1 \leq j \leq \sigma^2} \exp\left(-\frac{j^2}{4\sigma^2}\right) \leq \sigma \int_{0}^{+\infty} \exp(-x^2/4)\,\dd x.\]
So we deduce that there exist absolute constants $c_1, c_2$ such that for any spherical cap $D$,
\begin{equation} \label{inequality: variance}
\Var[N_D] \leq c_1\sigma+c_2
\end{equation}
and since $\sigma \leq \frac{\sqrt{n}}{2}$, we have for some absolute constant $c_3$,
\[\Var[N_D] \leq c_3\sqrt{n}.\]

We know that for any spherical cap $D$, the random variable $N_D$ has the same distribution as $\sum_{k} \eta_k$ where $\eta_k$ are independent Bernoulli random variables (see equation (\ref{dist1: N_D})). From Bernstein's inequality we get a concentration estimate for $N_D$:
\[\Prob \left(|N_D-\E N_D|\geq t \right) \leq  2\exp\left(-\min\left(\frac{t^2}{4\Var[N_D]},\frac{t}{4}\right)\right) \quad , \quad t\geq 0.\]
Let $t=O(n^{1/4}\sqrt{\log n})$. Then for any $M>0$ we have 
\begin{equation} \label{eqn: concentration of N_D}
N_D=\E N_D+O( n^{1/4}\sqrt{\log n})
\end{equation}
with probability $1-n^{-M}$ where the implied constant in the $O()$ notation is independent of $D$. It can be easily shown that there is a collection $\mathcal{A}'$ of $n^c$ spherical caps, for some constant $c>0$,  such that for any spherical cap $D$ there exist $D_1,D_2 \in \mathcal{A}'$ having the properties $D_1 \subset D \subset D_2$ and $|D_2\backslash D_1|\leq \frac{1}{n}$. Thus
\[\mathbf{D}(\{P_1,\ldots,P_n\}) \leq 1+\max_{D \in \mathcal{A}'} |N_{D'}-\E N_{D'}|.\]
So from the union bound, we see that the equation (\ref{eqn: concentration of N_D}) is also true uniformly in $D$. This completes the proof of the theorem.    
\end{proof}

\section{{\bf Hole probability and largest empty cap}}
\subsection{Hole probability}
In this subsection, we establish the asymptotic behavior of $\Delta_n(\alpha):=\Prob(N_D=0)$, where $D$ is a spherical cap on $\Sphere$ such that $|D|=4\pi \alpha$. We use the notation from the proof of Lemma \ref{lem:variance}. We know that $\Delta_n(\alpha)$ is equal to the Fredholm determinant of $\Id-\mathscr{K}^{(n)}_{g(D)}$. Also, from (\ref{dist1: N_D}) and (\ref{eqn: Bernoulli}) we have 
\[\Delta_n(\alpha)=\prod_{k=0}^{n-1} \Prob(S_n\leq k).\]
\begin{prop}\label{prop: Hole probability}
There exist a positive constant $c'$ such that
\begin{equation} \label{Prop: Hole probability}
\log\Delta_n(\alpha)=\frac{n^2}{2}(\alpha+\log(1-\alpha))+O(n\alpha\log(n\alpha))
\end{equation}
uniformly in $c'/n<\alpha<1-c'/n.$
\end{prop}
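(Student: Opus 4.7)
The plan is to start from the factorization $\log\Delta_n(\alpha)=\sum_{k=0}^{n-1}\log\Prob(S_n\le k)$ already given in the excerpt, and analyze the sum according to whether $k$ lies below or above the binomial mean $n\alpha$. The guiding heuristic is that the sum is dominated by $k<n\alpha$, where Cram\'er's theorem gives $\log\Prob(S_n\le k)\approx-nI(k/n)$ with the Kullback--Leibler rate function
\[I(p)=p\log\frac{p}{\alpha}+(1-p)\log\frac{1-p}{1-\alpha}.\]
The leading term then emerges from the Riemann-sum approximation
\[-\sum_{k=0}^{\lfloor n\alpha\rfloor}nI(k/n)\;\approx\;-n^2\int_0^\alpha I(x)\,\dd x\;=\;\frac{n^2}{2}\bigl(\alpha+\log(1-\alpha)\bigr),\]
the last equality being an elementary calculus identity (change of variables in each of the two pieces of $I$). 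The real work lies in controlling three errors.

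For $k\ge\lfloor n\alpha\rfloor$ I would use $|\log(1-p)|\le 2p$ whenever $p\le 1/2$. There is a transitional band of width $O(\sqrt{n\alpha(1-\alpha)})$ around $n\alpha$ where each summand is $O(1)$; outside that band $\Prob(S_n>k)\le 1/2$, and $\sum_{k}\Prob(S_n>k)\le\E S_n=n\alpha$. Hence the entire upper half of the sum contributes only $O(n\alpha)$, which is absorbed by the target error.

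For $1\le k<\lfloor n\alpha\rfloor$ Stirling's formula yields
\[\log\Prob(S_n=k)=-nI(k/n)-\tfrac12\log\!\bigl(2\pi k(n-k)/n\bigr)+O\!\bigl(1/k+1/(n-k)\bigr),\]
and the sandwich $\Prob(S_n=k)\le\Prob(S_n\le k)\le(k+1)\Prob(S_n=k)$ -- valid because the binomial pmf is unimodal with mode $\ge k$ throughout this range -- produces $\log\Prob(S_n\le k)=-nI(k/n)+O(\log(k+1))$ uniformly; the $k=0$ case is in fact exact. Summing the $O(\log(k+1))$ correction yields an error of $O(\log((n\alpha)!))=O(n\alpha\log(n\alpha))$ by Stirling once more. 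Finally, since $I$ is monotone decreasing on $[0,\alpha]$ from $-\log(1-\alpha)$ to $0$, the Riemann-sum error satisfies
\[\Bigl|\sum_{k=0}^{\lfloor n\alpha\rfloor}nI(k/n)-n^2\int_0^\alpha I(x)\,\dd x\Bigr|=O\bigl(n\,(-\log(1-\alpha))\bigr),\]
which in the regime $c'/n<\alpha<1-c'/n$ fits into $O(n\alpha\log(n\alpha))$ once $c'$ is chosen large enough that $n\alpha\log(n\alpha)$ dominates the absolute constants (this is exactly what pins down the value of $c'$).

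The principal difficulty is the middle step: squeezing the per-term LDP error down to $O(\log(k+1))$ so that the total comes out to $O(n\alpha\log(n\alpha))$, rather than the cruder $O(n\alpha\log n)$ that a na\"ive Stirling bound would give. The essential observation is that the binomial pmf decays geometrically away from its mode, which forces $\Prob(S_n\le k)$ to differ from its largest summand $\Prob(S_n=k)$ by only the trivial factor $k+1$, whose logarithm is $\le\log(n\alpha)$ -- precisely the form the claimed error takes. Combining the three contributions then gives the stated asymptotic.
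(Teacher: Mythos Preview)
Your approach is essentially the paper's: split at $k=\lfloor n\alpha\rfloor$, use Chernoff/Stirling to get $\log\Prob(S_n\le k)\approx -nI(k/n)$ for $k<n\alpha$, bound the tail $k>n\alpha$ via $-\log(1-p)\le 2p$, and pass to $n^2\int_0^\alpha I$ by a monotone Riemann-sum estimate. Two small remarks. First, for the upper half the paper uses the sharper bound $\sum_{k>n\alpha}\Prob(S_n>k)\le 2\Var[N_D]=O(\sqrt{n\alpha(1-\alpha)})$; your $\sum_k\Prob(S_n>k)\le\E S_n=n\alpha$ is cruder but still absorbed by $O(n\alpha\log(n\alpha))$, so no harm done. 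Second, your ``principal difficulty'' is a non-issue: the paper simply bounds $\frac{n}{k(n-k)}\ge\frac{1}{n\alpha}$ to get a uniform per-term error $\frac12\log(n\alpha)$, which already sums to $\frac12 n\alpha\log(n\alpha)$---there is no need to refine to $O(\log(k+1))$, and indeed your claimed $O(\log(k+1))$ per term is slightly off when $\alpha$ is near $1$ (the Stirling correction $\frac12\log\bigl(k(n-k)/n\bigr)$ picks up a $\frac12|\log(1-\alpha)|$), though this extra piece is itself $O(\log n)=O(\log(n\alpha))$ in that regime and changes nothing.
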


\begin{proof}
We can write
\begin{equation} \label{eqn: Hole probability}
\log\Delta_n(\alpha)=\sum_{k=0}^{n-1} \log \Prob(S_n\leq k).
\end{equation}
By the exponential Chebyshev inequality,  we have 
\begin{equation} \label{Hole probability: Upper bound}
\Prob(S_n\leq k) \leq e^{-n I(\frac{k}{n})}
\end{equation}
for any $k \leq n\alpha$ where $I(x)=\sup_{t \in \RR}(tx- \log(\E e^{t B_1}) )$ is the Legendre transform of the cumulant
generating function of $B_1$. One easily computes that $I(x)=x\log \frac{x}{\alpha}+(1-x)\log \frac{1-x}{1-\alpha}$.
On the other hand, from Stirling's formula we have
\begin{align*}
\Prob(S_n\leq k) \geq \binom{n}{k} \alpha^k(1-\alpha)^{n-k} &\geq c_0 \frac{n^{n+1/2}}{k^{k+1/2} (n-k)^{n-k+1/2}} \alpha ^k(1-\alpha)^{n-k} \\
&\geq c_0 \left(\frac{n}{k(n-k)}\right)^{1/2} e^{-n I(\frac{k}{n})}
\end{align*}
 for $0<k \leq n\alpha$ and some absolute constant $c_0$. Therefore, for all $0<k \leq n\alpha$, we have
\begin{equation} \label{Hole probability: Lower bound}
\log \Prob(S_n\leq k) \geq -nI(k/n)-\frac{1}{2} \log (n\alpha) +\log c_0.
\end{equation}
From (\ref{Hole probability: Upper bound}) and (\ref{Hole probability: Lower bound}), it follows that 
\begin{equation} \label{Hole probability: appro1}
\left|\sum_{0 \leq k \leq n\alpha} \log \Prob(S_n\leq k)+n \sum_{0 \leq k \leq n\alpha} I(k/n) \right|\leq n\alpha\left(\frac{1}{2} \log (n\alpha)-\log c_0\right)
\end{equation}
for sufficiently large $n$ (for $k=0$, note that $\log \Prob(S_n=0)=-nI(0)$).

Since the median of binomial distribution is either $\lfloor n\alpha \rfloor$ or $\lceil n\alpha \rceil$, this implies that $\Prob(S_n>k) \leq 1/2$ when $k>n\alpha$. Using the bound $-\log (1-x)<2x$, $0 \leq x \leq 1/2$, we then obtain   
\[\sum_{n\alpha<k \leq n-1} -\log \Prob(S_n\leq k) < 2\sum_{n\alpha<k \leq n-1} \Prob(S_n>k).\] 
From (\ref{eqn: variance}), the right hand side of above inequality is smaller than $4 \Var[N_D]$, and then from (\ref{inequality: variance}) we have
\begin{equation} \label{Hole probability: appro2}
\sum_{n\alpha<k \leq n-1} -\log \Prob(S_n\leq k) \leq c_1\sqrt{n\alpha(1-\alpha)}+c_2
\end{equation}
for some constants $c_1,c_2$.
Also, by Riemann integration, we have
\begin{equation} \label{Hole probability: appro3}
\left|\sum_{0 \leq k \leq n\alpha} I(k/n)-n\int_0^\alpha I(x) \,\dd x \right| \leq I(0)-I(\alpha)=-\log(1-\alpha)
\end{equation}
(Since $I$ is an increasing function on $[0,\alpha]$ and $I(\alpha)=0$)
From (\ref{eqn: Hole probability}), (\ref{Hole probability: appro1}), (\ref{Hole probability: appro2}) and (\ref{Hole probability: appro3}), we have
\begin{align*}
&\left|\log\Delta_n(\alpha)+n^2\int_0^\alpha I(x) \,\dd x \right| \leq \\
&\frac{1}{2} n\alpha \log(n\alpha)-n\log(1-\alpha)-n\alpha \log c_0+c_1\sqrt{n\alpha(1-\alpha)}+c_2. 
\end{align*}
So we conclude that for sufficiently large constant $c'$
\[\left|\log\Delta_n(\alpha)-\frac{n^2}{2}(\alpha+\log(1-\alpha))\right|=O(n\alpha\log(n\alpha))\]
uniformly in $c'/n<\alpha<1-c'/n.$
\end{proof}
\subsection{largest empty cap}

\begin{proof}[Proof of Theorem \ref{thm: Largest empty cap}]
We follow the proof of  Theorem 1.3 in \cite{Arous}.
Let $\eps>0$ be arbitrary and set 
\[X_n:=\frac{n}{8\pi \sqrt{\log n}}M_n\]
we easily see the inequality
\begin{equation*}
\E (|X_n-1|^s) \leq \epsilon^s+\Prob (X_n<1-\epsilon)+
\E (|X_n-1|^s 1_{\{X_n>1+\epsilon\}}). 
\end{equation*}
Hence, it suffices to show that the last two terms go to zero as $n$ tends to infinity. Integrating by parts, we get
\begin{equation*}
\E(|X_n-1|^s \mathrm{I}_{\{X_n>1+\epsilon\}})
=\int_{1+\epsilon}^{+\infty} s(u-1)^{s-1}\Prob(X_n>u)\,\mathrm{d}u+\epsilon^s \Prob (X_n>1+\epsilon).
\end{equation*}
Next, we give an upper bound for $\Prob(X_n>u)$. We can choose on $\Sphere$ at most $4n$ points $q_1,\ldots,q_m$ so that every spherical cap with area $4\pi/n$ contains at least one of these points. (We can find $m \leq 4n$ spherical caps with area $\pi/n$ such that there exists no spherical cap with this area that does not intersect these $m$ spherical caps. Let $q_1,\ldots,q_m$ be the center of these $m$ spherical caps. One can check that $q_1,\ldots,q_m$ have desired properties.)  Let $Y_i$, $1 \leq i \leq m$, be the area of the largest empty cap centred at $q_i$. Note that there exists some $q_j$ within distance $2/\sqrt{n}$ of the center of the largest empty cap. From this, we can conclude that for any $u \geq 1+\eps$ and sufficiently large $n$,
\begin{align} \label{appro3}
\Prob (X_n>u)&\leq \Prob\left(M_n>\frac{8\pi \sqrt{\log n}}{n} (1+\eps)\right) \\ 
&\leq \sum_{i=1}^{m} \Prob \left(Y_i> \frac{8\pi\sqrt{\log n}}{n} (1+\eps /2) \right). \nonumber
\end{align}
Thus, for sufficiently large $n$
\begin{equation} \label{appro2}
\mathbf{P}(X_n>u)\leq 4n\Delta_n\left( \frac{2\sqrt{\log n}}{n} (1+\eps /2)\right).
\end{equation}
Using (\ref{Prop: Hole probability}) and (\ref{appro2}), we conclude that there is some $\delta>0$ such that
\begin{equation} \label{appro1}
\Prob (X_n >  u)=o(n^{-\delta})
\end{equation}
 uniformly for $u \geq 1+\eps$. Also, we can write
 \begin{align*} 
\int_{1+\eps}^{+\infty} s(u-1)^{s-1}\mathbf{P}(X_n>u)\,\mathrm{d}u=&
\int_{1+\eps}^{\log n} s(u-1)^{s-1}\Prob(X_n>u)\,\mathrm{d}u\\+
&\int_{\log n}^{\frac{n}{2\sqrt{\log n}}} s(u-1)^{s-1}\Prob(X_n>u)\,\mathrm{d}u. 
\end{align*}
 The first integral on the right hand side goes to zero as $n \to +\infty$, thanks to (\ref{appro1}). 
 Using a similar argument as in (\ref{appro3}) and the crude upper bound  
\[\Delta_n(\alpha)\leq \Prob(S_n=0)= (1-\alpha)^n\] we conclude that for any fixed $C>0$, one has 
\[\Prob(X_n >u)=o(n^{-C})\]
uniformly for $u \geq \log n$.
 Thus the second integral also goes to zero as $n \to +\infty$.

To prove Theorem \ref{thm: Largest empty cap}, it suffices to show that $\Prob(X_n<1-\eps)$  converges to zero. 
The following lemma is similar to Lemma 3.3 in \cite{Arous}. 
 The proof is based on negative association property for the events such as $\{N_{D_1}=0\}$ and $\{N_{D_2}=0\}$ where $D_1,D_2$ are two disjoint spherical caps on $\Sphere$. Note that negative association property holds in this case. See Theorem 1.4 in \cite{Gosh} or proof of Lemma 3.8 in \cite{Arous}. 
\begin{lem} 
Consider a set of disjoint spherical caps on the sphere. Let $F_n$ be the number of such caps free of eigenvalues. Then $\Var (F_n) \leq \E (F_n).$
\end{lem}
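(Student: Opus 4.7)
The plan is to write $F_n$ as a sum of indicators and bound the variance by comparing the second moment to $\E(F_n)$, using negative association of hole events for the determinantal process $\mathcal{X}^{(n)}$. Enumerate the disjoint spherical caps as $D_1,\ldots,D_m$ and set $A_i:=\{N_{D_i}=0\}$, so that
\[
F_n=\sum_{i=1}^{m} 1_{A_i}, \qquad \E(F_n)=\sum_{i=1}^{m}\Prob(A_i).
\]
Expanding the square and subtracting $(\E F_n)^2$ one obtains the decomposition
\[
\Var(F_n)=\sum_{i=1}^{m}\Prob(A_i)\bigl(1-\Prob(A_i)\bigr)+\sum_{i\neq j}\bigl[\Prob(A_i\cap A_j)-\Prob(A_i)\Prob(A_j)\bigr].
\]
The first sum is clearly at most $\sum_i \Prob(A_i)=\E(F_n)$, so the problem reduces to controlling the covariance terms.

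The key step is to show that for any two disjoint spherical caps $D_i$ and $D_j$,
\[
\Prob(N_{D_i}=0,\,N_{D_j}=0)\leq \Prob(N_{D_i}=0)\,\Prob(N_{D_j}=0),
\]
i.e.\ the hole events are negatively correlated. This is precisely the negative association property for the number of points in disjoint Borel sets of a determinantal process, and the excerpt already points to the references (Theorem 1.4 in the Ghosh paper, and the argument in the proof of Lemma 3.8 of Arous). Concretely, one can invoke the Ghosh--Peres type result: if $K$ is a locally trace class self-adjoint kernel with eigenvalues in $[0,1]$, then for any finite family of disjoint measurable sets $B_1,\ldots,B_k$ the counting statistics $(N_{B_1},\ldots,N_{B_k})$ are negatively associated, and in particular the events $\{N_{B_i}=0\}$ are negatively correlated pairwise. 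In our setting the correlation kernel $K^{(n)}$ of $\mathcal{X}^{(n)}$ (with respect to $\dd\mu$) is a projection kernel of rank $n$, so the hypothesis is satisfied.

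Granted this negative correlation, every term in the cross sum is nonpositive, hence
\[
\Var(F_n)\leq \sum_{i=1}^{m}\Prob(A_i)\bigl(1-\Prob(A_i)\bigr)\leq \sum_{i=1}^{m}\Prob(A_i)=\E(F_n),
\]
which is the claimed inequality. The only real obstacle is the verification of negative association; everything else is a one-line moment computation. Since the paper explicitly cites the sources where negative association for determinantal processes is established, the cleanest exposition is simply to invoke those results rather than reprove them, keeping the proof of the lemma to the two-line variance decomposition above.
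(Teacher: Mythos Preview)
Your proposal is correct and matches the paper's own approach: the paper does not give a detailed proof but simply states that the argument rests on the negative association of the hole events $\{N_{D_i}=0\}$ for disjoint caps, citing Theorem~1.4 in \cite{Gosh} and the proof of Lemma~3.8 in \cite{Arous}. Your variance decomposition plus the negative-correlation input from those references is exactly what is intended.
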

 Now consider $c_n=\Omega\left(n/ \sqrt{\log n}\right)$ disjoint spherical caps $D_1, D_2, \ldots, D_{c_n}$ with area $\frac{(1-\epsilon)8\pi \sqrt{\log n}}{n}$. If $F_n$ be the number of such caps free of eigenvalues then from previous lemma and Chebyshev's inequality one has
\[\Prob(X_n<1-\epsilon)\leq \Prob(F_n=0)\leq \frac{\Var(F_n)}{\E (F_n)^2} \leq \frac{1}{\E (F_n)}\]
Also, using (\ref{Prop: Hole probability}) we have 
 \[\E (F_n)=n^{1-(1-\epsilon)^2+o(1)},\]
 which implies that $\Prob(X_n<1-\epsilon)\to 0$ as desired.
 \end{proof}
\subsection{Nearest neighbour statistics}
Consider the random point process $\mathcal{X}^{(n)}=\sum_{j=1}^n \delta_{P_j}$. Define for $1 \leq j \leq n$
\[d_j:=\min_{i \neq j} |P_i-P_j|\]
the minimum distance from $P_j$ to the remaining points. We define,  as in \cite{Sarnak}, the nearest neighbour spacing measure $\mu(P_1,\ldots,P_n)$ on $[0,+\infty)$ by
\begin{equation} \label{Def: Nearest neighbour}
\mu(P_1,\ldots,P_n):=\frac{1}{n} \sum_{j=1}^{n} \delta_{\frac{n}{4}d_j^2}.
\end{equation}
Let
\[Q(x)=-\frac{\dd}{\dd x} E_{\infty}(x)\]
where
\[E_{\infty}(x)=\lim_{n \to+\infty} E_n(x) \quad ,\quad E_n(x)=\prod_{k=1}^{n-1} e^{-x} \sum_{j=0}^{k} \frac{x^j}{j!}.\]
We want to show that,
\begin{thm} \label{Theorem: Nearest neighbour}
As $n \to +\infty$,
\begin{equation} \label{thm: Nearest neighbour}
\mu(P_1,\ldots,P_n) \longrightarrow Q(x) \,\dd x. 
\end{equation}
\end{thm}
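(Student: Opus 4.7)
The strategy is to use the rotation invariance of $\mathcal{X}^{(n)}$ to reduce $\E[\mu(f)]$ to a Palm-type expectation at a fixed base point, identify the reduced Palm process as an explicit determinantal process on $\CC$, and pass to the natural Ginibre scaling limit. For a bounded continuous test function $f$, symmetry among the $P_j$ gives $\E[\mu(f)] = \E f(\tfrac{n}{4}d_1^2)$, and the Campbell--Mecke formula combined with rotation invariance lets us replace the random base point $P_1$ by the south pole $\mathfrak{s}$:
\[
\E[\mu(f)] \;=\; \E^{!}_{\mathfrak{s}}\,f\!\left(\tfrac{n}{4}\,d(\mathfrak{s},\mathcal{Y})^2\right),
\]
where $\mathcal{Y}$ is the reduced Palm process of $\mathcal{X}^{(n)}$ at $\mathfrak{s}$. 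Stereographic projection from the north pole sends $\mathfrak{s}$ to $0\in\CC$, and since $K^{(n)}(0,0)=1$, the process $\mathcal{Y}$ is determinantal with kernel
\[
K^{(n)}_0(z,w) \;=\; (1+z\bar w)^{n-1}-1 \;=\; \sum_{k=1}^{n-1}\binom{n-1}{k}(z\bar w)^k
\]
against the original background measure $\dd\mu$.

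\textbf{Kostlan decomposition and scaling limit.} Since $K^{(n)}_0$ is spanned by the orthogonal monomials $z,z^2,\ldots,z^{n-1}$, the radial Kostlan-type decomposition recalled in the proof of Lemma~\ref{lem:variance} identifies the squared moduli of the $n-1$ points of $\mathcal{Y}$ with an independent family $\{Q'_k\}_{k=1}^{n-1}$, where $Q'_k$ has BetaPrime$(k+1,n-k)$ density $n\binom{n-1}{k}q^k/(1+q)^{n+1}$. A direct density calculation gives $nQ'_k \todist \gamma_{k+1}$, a Gamma$(k+1,1)$ variable, for each fixed $k$, and by independence $(nQ'_k)_{k\geq 1}\todist(\gamma_{k+1})_{k\geq 1}$ jointly. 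The chord distance from $\mathfrak{s}$ to $g^{-1}(\lambda)$ equals $2|\lambda|/\sqrt{1+|\lambda|^2}$, so $\tfrac{n}{4}d(\mathfrak{s},g^{-1}(\lambda))^2 = n|\lambda|^2/(1+|\lambda|^2)$; because the smallest modulus squared is of order $1/n$, the denominator contributes only $1+o(1)$, and $\tfrac{n}{4}d(\mathfrak{s},\mathcal{Y})^2 \todist \min_{k\geq 1}\gamma_{k+1}$. By the definitions in the statement, this minimum has tail $\prod_{k\geq 1}\Prob(\gamma_{k+1}>x) = E_\infty(x)$ and density $Q(x)$. A uniform-integrability step based on the elementary bound $\Prob(\gamma_{k+1}<x)\leq x^{k+1}/(k+1)!$, which makes the tail in $k$ negligible, then lets one truncate the infinite minimum and pass to the limit through the expectation, giving $\E[\mu(f)] \to \int_0^\infty f(x)\,Q(x)\,\dd x$.

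\textbf{Variance and conclusion.} To promote this expectation convergence to convergence of the random measure, one shows $\Var[\mu(f)]\to 0$. The variance splits as $\tfrac{1}{n}\Var f(\tfrac{n}{4}d_1^2)=O(1/n)$ plus the off-diagonal piece $\tfrac{n-1}{n}\,\mathrm{Cov}\bigl(f(\tfrac{n}{4}d_1^2),\,f(\tfrac{n}{4}d_2^2)\bigr)$. Controlling this covariance is the main technical obstacle. The two-point reduced Palm process at a pair of points is again determinantal, with kernel obtained from $K^{(n)}$ by subtracting the rank-two projection onto the span of $K^{(n)}(\cdot,z_1)$ and $K^{(n)}(\cdot,z_2)$; when the sphere distance $|P_1-P_2|$ is large compared to the typical scale $n^{-1/2}$, the off-diagonal block of this Palm kernel decays and the rescaled Palm processes near $P_1$ and $P_2$ become asymptotically independent Ginibre-like ensembles, driving the conditional covariance to zero. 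Pairs with $|P_1-P_2|$ on the order of $n^{-1/2}$ have total probability $O(1/n)$ and contribute negligibly since $f$ is bounded. Together, these estimates yield $\mu(P_1,\ldots,P_n)\to Q(x)\,\dd x$ weakly in probability, which is the claimed convergence.
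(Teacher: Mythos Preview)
Your first-moment argument is essentially the paper's, recast in Palm-process language. The paper does not name the reduced Palm kernel; instead it defines $\tilde\Delta_n(\alpha)$ as the probability that the cap of area $4\pi\alpha$ centered at a given $P_j$ contains no other point, and derives the product formula $\tilde\Delta_n(\alpha)=\prod_{k=1}^{n-1}\Prob(S_n\le k)$ by differentiating the two-radius hole probability $\Delta_n(\alpha,\alpha')$ in $\alpha'$ at $\alpha'=0$. Via the Beta--Binomial identity this product is exactly your $\prod_{k=1}^{n-1}\Prob(Q'_k>r^2)$, and the paper's Poisson approximation $\Prob(S_n\le k)\to e^{-x}\sum_{j\le k}x^j/j!$ is exactly your Gamma limit $nQ'_k\todist\gamma_{k+1}$. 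So on expectations the two arguments coincide in content if not in dress.

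The second-moment step is where you diverge, and it is also where your proposal has a real gap. The paper does not try to estimate $\mathrm{Cov}\bigl(f(\tfrac{n}{4}d_1^2),f(\tfrac{n}{4}d_2^2)\bigr)$ at all. It observes that $Y_n=\#\{j:\tfrac{n}{4}d_j^2<x\}$ is a Lipschitz functional of the configuration (adding or deleting one point changes $Y_n$ by at most a fixed constant, since on $\Sphere$ a point can be nearest neighbor to only boundedly many others) and applies the Pemantle--Peres concentration inequality for Lipschitz functionals of determinantal processes to get $\Prob(|Y_n-\E Y_n|\ge a)\le 5\exp\bigl(-a^2/(16c'(a+2c'n))\bigr)$; Borel--Cantelli then yields almost-sure convergence. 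Your route through two-point Palm kernels and asymptotic independence of local Ginibre windows is plausible in outline, but the sentence ``the off-diagonal block of this Palm kernel decays \ldots\ driving the conditional covariance to zero'' is precisely the substantive work and you have not carried it out: one needs a quantitative decorrelation bound uniform over well-separated pairs, integrated against the two-point intensity, together with a careful treatment of the event that $P_2$ is itself the nearest neighbor of $P_1$ (and vice versa). If you want a complete proof, swapping in the Lipschitz-concentration step is both much shorter and delivers the stronger almost-sure conclusion.
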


One can easily check that for independent uniform points on sphere, this measure converges to $e^{-x} \, \dd x$ as $n$ tends to infinity. See Figure \ref{Fig: Random vs spherical ensemble}.

\begin{proof}[Proof of Theorem \ref{Theorem: Nearest neighbour}]
For fixed $x>0$ let
\[Y_n=\sum_{j=1}^{n} \delta_{\frac{n}{4}d_j^2} (0,x).\]
To prove (\ref{thm: Nearest neighbour}) it suffices to show that
\begin{equation} \label{Nearest neighbour: a.s.}
\frac{Y_n}{n} \overset{\mbox{\rm \scriptsize a.s.}}{\longrightarrow} 1-E_{\infty}(x).
\end{equation}
First, we compute the expectation of $Y_n$. 
 Let $\tilde{\Delta}_n(\alpha)$ denote the probability that  the spherical cap $D$ with area $4\pi \alpha$ centred at $P_j$ 
has no other points of $\{P_1,\ldots,P_n\}$ in its interior. With this definition we obtain
\begin{equation} \label{eqn: expectation of X_n}
\E Y_n=n(1-\tilde{\Delta}_n(x/n)).
\end{equation}
We first show that
\begin{equation} \label{eqn:  Nearest neighbour probability}
\tilde{\Delta}_n(\alpha)=\frac{1}{(1-\alpha)^n} \Delta_n(\alpha)=\prod_{k=1}^{n-1} \Prob(S_n \leq k)
\end{equation}
where $S_n$ is a binomial random variable with parameters $n$ and $\alpha$.
For $\alpha'<\alpha$ define $\Delta_n(\alpha,\alpha'):=\Prob\left(N_{D\backslash D'}=0\right)$ where $|D|=4\pi \alpha$, $|D'|=4\pi \alpha'$ and $D,D'$ have the same center. 
$\Delta_n(\alpha,\alpha')-\Delta_n(\alpha)$ is the probability that there are no points of $\mathcal{X}^{(n)}$ in the $D\backslash D'$, but there is at least one point (of $\mathcal{X}^{(n)}$) in the $D'$. 
Now, conditioning on the $\{N_{D'} \geq 1\}$ and letting $\alpha' \to 0$ (keeping only the first-order terms), one obtains 
\[\frac{\dd}{\dd \alpha'}\Delta_n(\alpha,\alpha')_{|_{\alpha'=0}}=n \tilde{\Delta}_n(\alpha)\]
or equivalently that
\begin{equation} \label{eqn: Nearest neighbour probability 1}
\tilde{\Delta}_n(\alpha)=\frac{1}{n}\frac{\dd}{\dd \alpha'} \log \Delta_n(\alpha,\alpha')_{|_{\alpha'=0}} \Delta_n(\alpha).
\end{equation}
Similar to (\ref{dist0: N_D}), we have
\[N_{D\backslash D'} \dist \Big|\{k: 0 \leq k \leq n-1 \, , \, \frac{\alpha'}{1-\alpha'}<Q_k<\frac{\alpha}{1-\alpha} \}\Big|.\]
where $Q_k$, $0 \leq k \leq n-1$, are as in Lemma \ref{lem:variance}. A straightforward computation yields
\begin{align*}
&\left. \frac{\dd}{\dd \alpha'} \log \Delta_n(\alpha,\alpha')\right| _{\alpha'=0}=\\
&\left. \frac{\dd}{\dd \alpha'} \sum_{k=0}^{n-1} \log \left( \int_{0}^{\frac{\alpha'}{1-\alpha'}} \frac{n\binom{n-1}{k}q^k}{(1+q)^{n+1}}\,\mathrm{d}q + \int_{\frac{\alpha}{1-\alpha}}^{+\infty} \frac{n\binom{n-1}{k}q^k}{(1+q)^{n+1}}\,\mathrm{d}q \right)\right|_{\alpha'=0}=\frac{n}{(1-\alpha)^n}.
\end{align*}
Thus, inserting this into (\ref{eqn: Nearest neighbour probability 1}), we obtain (\ref{eqn:  Nearest neighbour probability}). 

Next, we show that
\begin{equation} \label{appro: Nearest neighbour probability}
|\tilde{\Delta}_n(x/n)-E_n(x)|=O\left(\frac{\log n}{n}\right).
\end{equation}
For this, we will use identity (\ref{eqn:  Nearest neighbour probability}) and Poisson approximation.
From a result in \cite{Cam} we have
\begin{equation} \label{appro: Poisson approximation}
\left| \Prob(S_n \leq k)-e^{-x} \sum_{j=0}^{k} \frac{x^j}{j!} \right|=O(n^{-1})
\end{equation}
uniformly for all $k$.
From (\ref{inequality: Bernstein}) we see that there exists $M=O(\log n)$ such that for every $k \geq M$ we have
\begin{equation} \label{appro: Poisson approximation2}
\Prob(S_n \leq k) \geq 1-\frac{1}{n^2} \quad , \quad e^{-x} \sum_{j=0}^{k} \frac{x^j}{j!} \geq 1-\frac{1}{n^2}
\end{equation}
where, for the second inequality, note that from Taylor's theorem  
\[\left| 1-e^{-x} \sum_{j=0}^{k} \frac{x^j}{j!}\right| \leq e^{-x} \left| e^x- \sum_{j=0}^{k} \frac{x^j}{j!}\right| \leq \frac{x^{k+1}}{(k+1)!} \] 
and then use Stirling's formula. Therefore, using the inequality
\begin{align*}
|\tilde{\Delta}_n(x/n)-E_n(x)|&=\left|\prod_{k=1}^{n-1} \Prob(S_n \leq k)-\prod_{k=1}^{n-1} e^{-x} \sum_{j=0}^{k} \frac{x^j}{j!}\right| \\ 
& \leq \sum_{k=1}^{n-1} \left| \Prob(S_n \leq k)- e^{-x} \sum_{j=0}^{k} \frac{x^j}{j!}\right|, 
\end{align*}
\ref{appro: Poisson approximation} and \ref{appro: Poisson approximation2} we get (\ref{appro: Nearest neighbour probability}). From (\ref{eqn: expectation of X_n}) and (\ref{appro: Nearest neighbour probability}) we deduce that
\[\lim_{n \to +\infty} \frac{\E Y_n}{n}=1-E_{\infty}(x).\]
In \cite{Peres} (See Theorem 3.5 therein) it has been shown that for every 1-Lipschitz function on finite counting measures on $\Sphere$ such as $f$, (This means that deleting or adding one point to a configuration of points on $\Sphere$ changes $f$ by at most 1. The point process on $\Sphere$ can also be viewed as a random counting measure on $\Sphere$.) satisfies the following concentration inequality: 
\[\Prob(|f-\E f| \geq a) \leq 5\exp\left(-\frac{a^2}{16(a+2n)}\right).\]
Fix $r>0$ and let $f$ count the number of points of point process $\{P_1,\ldots,P_n\}$ such that $d_j<r$. It is simple to check that $f$ is Lipschitz with some constant $c'>0$. (If one point is added then $f$ increases by at most $c'$. Using a simple geometric argument, one can choose $c' = 7$.) Applying previous inequality to $f/c'$, we conclude that   
\[\Prob(|Y_n-\E Y_n| \geq a) \leq 5\exp\left(-\frac{a^2}{16c'(a+2c'n)}\right).\]
Using the Borel-Cantelli Lemma, this gives (\ref{Nearest neighbour: a.s.}) as desired. 
\end{proof}
\begin{figure}[h]
\begin{center}
 \includegraphics[width=110mm]
{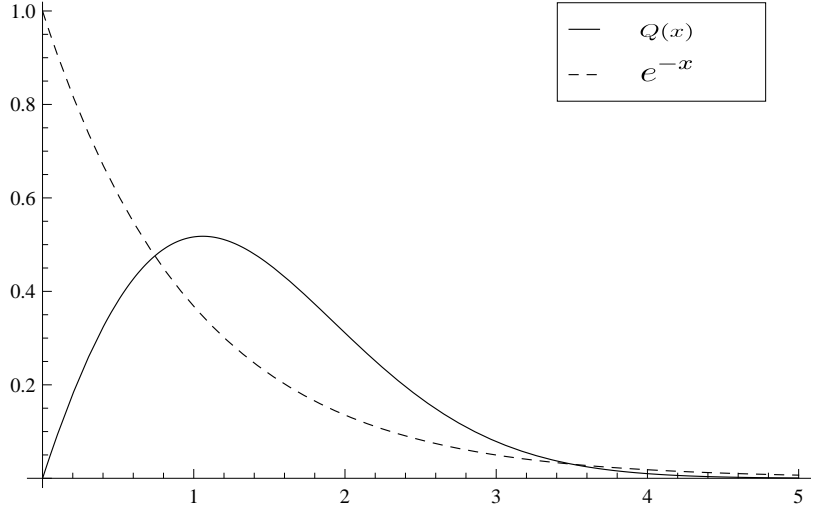}
 \caption{Comparison between the density of nearest neighbour spacing measure $\mu$ (see (\ref{Def: Nearest neighbour})) in the case that the points are independently chosen as uniform distribution (on sphere)  and spherical ensemble case in the limit $n \to +\infty$.} \label{Fig: Random vs spherical ensemble}
\end{center}
\end{figure}
\begin{rem}
The infinite Ginibre ensemble is a translation invariant determinantal point process on the complex plane $\CC$ with kernel $K(z,w)=e^{z\bar{w}}$ with respect to the Gaussian measure $\frac{1}{\pi} e^{-|z|^2}\,\dd z$. It can be viewed as the local limit of the law  of eigenvalues of random matrices from the complex Ginibre ensemble (see \cite{GAF-Book} for more details). 
From (\ref{appro: Nearest neighbour probability}) we deduce that
\[\lim_{n \to +\infty} \Delta_n(x/n)=\exp(-x) E_{\infty}(x).\]
The right-hand side of above equation is equal to the probability that a disk of radius $\sqrt{x}$ in the complex plane contains no points of infinite Ginibre ensemble (see e.g. \cite{Forrester2}). Also, if we consider the complex Ginibre ensemble, then $E_n(x)$ is the conditional probability that if one eigenvalue (of an $n \times n$ random matrix with i.i.d. standard complex Gaussian entries) lies at the origin, all $n-1$ others are further away than $\sqrt{x}$. Compare with the definition of $\tilde{\Delta}$ and see (\ref{appro: Nearest neighbour probability}).
\end{rem}

\section{\bf{ Riesz and logarithmic energy}}
The purpose of this section is to establish Theorem \ref{theorem: Riesz energy} and Corollary \ref{Cor: Riesz energy}.
We start with computing the correlation functions of $\mathcal{X}^{(n)}$ on $\Sphere$. Let $\mathring{\rho}_k^{(n)}(p_1, \ldots, p_k)$, $k \geq 1,$ be the correlation functions of the point process $\mathcal{X}^{(n)}$ with respect to the surface area measure $\dd \nu(p)$.   
Set $g(p)=z$ and $g(q)=w$. Since $\dd \nu(p)=\frac{4}{(1+|z|^2)^2} \,\dd z$ we conclude that
\[\mathring{\rho}_{k}^{(n)}(p_1,\ldots , p_k)=\det(\mathring{K}^{(n)}(p_i,p_j))_{1 \leq i,j \leq k}\]
where 
\[\mathring{K}^{(n)}(p,q)=\frac{n}{4\pi} \frac{K^{(n)}(z,w)}{(1+|z|^2)^{\frac{n-1}{2}}(1+|w|^2)^{\frac{n-1}{2}}}.\]
Also, one can easily verify that
\[|p-q|=\frac{2|z-w|}{\sqrt{1+|z|^2}\sqrt{1+|w|^2}}.\]
Recall that $K^{(n)}(z,w)=(1+z\bar{w})^{n-1}$. A short computation then shows that
\begin{align} \label{eqn: norm of kernel}
|\mathring{K}^{(n)}(p,q)|^2 &=\left(\frac{n}{4 \pi}\right)^{2} \left(\frac{1+|zw|^2+2\Re(z\bar{w})}{(1+|z|^2)(1+|w|^2)}\right)^{n-1} \\
&=\left(\frac{n}{4 \pi}\right)^{2} \left(1-\frac{|z-w|^2}{(1+|z|^2)(1+|w|^2)}\right)^{n-1} \nonumber \\
&= \left(\frac{n}{4 \pi}\right)^{2}\left(1-\frac{|p-q|^2}{4}\right)^{n-1}. \nonumber
\end{align}
Thus from above equation, the 2-point correlation function $\mathring{\rho}_2^{(n)}$ is given by
\begin{equation} \label{eqn: Rho2} 
\mathring{\rho}_{2}^{(n)}(p,q)=
\left|
\begin{array}{ll}
\mathring{K}^{(n)}(p,p)&\mathring{K}^{(n)}(p,q)\\ [1mm]
\mathring{K}^{(n)}(q,p)&\mathring{K}^{(n)}(q,q)
\end{array} 
\right|
=\left(\frac{n}{4 \pi}\right)^{2}\left(1-\left(1-\frac{|p-q|^2}{4}\right)^{n-1}\right).
\end{equation}

\begin{proof}[Proof of Theorem \ref{theorem: Riesz energy}]
We begin with part ii). Similar to (\ref{eqn: def of rho}), we have for a suitable test function $F$ 
\begin{equation} \label{expectation and pho2}
\E \sum_{i \neq j} F(P_i,P_j)= \int_{\Sphere \times \Sphere} F(p,q) \mathring{\rho}_2^{(n)}(p,q) \,\dd \nu(p) \,\dd \nu(q). 
\end{equation}
Thus by setting $F(p,q)=|p-q|^{-s}$ we obtain
\[\E E_{s}(P_1,\ldots,P_n)=\int_{\Sphere \times \Sphere} |p-q|^{-s} \mathring{\rho}_2^{(n)}(p,q) \,\dd \nu(p) \,\dd \nu(q).\]
 Notice that the point process is invariant in distribution under isometries of the sphere and so by Fubini's theorem we conclude that
\[\E E_{s}(P_1,\ldots,P_n)=4\pi \int_{\Sphere} |p-q|^{-s} \mathring{\rho}_2^{(n)}(p,q) \,\dd \nu(p)\]
where $q=g^{-1}(0).$ Thus we can write
\[\E E_{s}(P_1,\ldots,P_n)=\frac{n^2}{2^s \pi} \int_{\CC} |z|^{-s} \frac{(1+|z|^2)^{n-1}-1}{(1+|z|^2)^{n+1-s/2}} \,\dd z.\]
(This can also be obtained directly from Equation \ref{eqn: def of rho}, letting 
\[F(z,w)=\left(\frac{2|z-w|}{\sqrt{1+|z|^2}\sqrt{1+|w|^2}}\right)^{-s}\] and using suitable linear fractional transformations corresponding to the rotations of $\Sphere$ by stereographic projection). Changing to polar coordinates, we get
\begin{equation} \label{eqn: Riesz energy 1}
\E E_{s}(P_1,\ldots,P_n)=\frac{n^2}{2^{s-1}} \int_{0}^{+\infty} \frac{1}{r^{s-1}}\frac{(1+r^2)^{n-1}-1}{(1+r^2)^{n+1-s/2}} \,\dd r.
\end{equation} 
 Making the change of variable $u=r^2$, we see that 
\begin{align} \label{eqn: Riesz energy 2}
\E E_{s}(P_1,\ldots,P_n) &=\frac{n^2}{2^{s}} \sum_{j=1}^{n-1} \binom{n-1}{j} \int_{0}^{+\infty} \frac{1}{u^{s/2-j}(1+u)^{n+1-s/2}}\,\dd u \\ 
&=\frac{n^2}{2^{s}} \sum_{j=1}^{n-1} \frac{\Gamma(n) \Gamma(j+1-s/2)}{\Gamma(n+1-s/2)\Gamma(j+1)}. \nonumber
\end{align}
In the last line, we have used the beta function identity 
\begin{align*}
\int_{0}^{+\infty} \frac{1}{u^{s/2-j}(1+u)^{n+1-s/2}}\,\dd u &=\int_{0}^{1} t^{j-s/2}(1-t)^{n-j-1}\,\dd t \\ 
&=\frac{\Gamma(j+1-s/2)\Gamma(n-j)}{\Gamma(n+1-s/2)}.  
\end{align*}
By using induction on $n$, one can show that for $s \neq 2$ 
\[\sum_{j=0}^{n-1} \frac{\Gamma(j+1-s/2)}{\Gamma(j+1)}=\frac{\Gamma(n+1-s/2)}{(1-s/2)\Gamma(n)}.\]
Combining this with (\ref{eqn: Riesz energy 2}), we obtain (\ref{thm: Energy part 2}) as required.

For $s=2$, from (\ref{eqn: Riesz energy 2}) we have   
\[\E E_{2}(P_1,\ldots , P_n)=\frac{n^2}{4}\sum_{j=1}^{n-1} \frac{1}{j}.\]
Also, from Euler-Maclaurin Summation Formula, we know 
\begin{equation} \label{appro: Harmonic}
\sum_{j=1}^{n} \frac{1}{j}=\log n+ \gamma +\frac{1}{2n}-\frac{1}{12n^2}+O\left(\frac{1}{n^4}\right),
\end{equation}
(see the proof of Theorem 3 in \cite{Boas}) and hence
\[\E E_{2}(P_1,\ldots , P_n)=\frac{1}{4}n^2 \log n+\frac{\gamma}{4}n^2-\frac{n}{8}-\frac{1}{48}+O\left(\frac{1}{n^2}\right).\]
It remains to prove (\ref{thm: Energy part 1}). By differentiation right hand side of (\ref{eqn: Riesz energy 1}) with respect to $s$ at $s=0$, we conclude that
\begin{align*}
 \E E_{\log}(P_1,\ldots,P_n) &\left.=\frac{\dd}{\dd s} \E E_{s}(P_1,\ldots,P_n) \right|_{s=0^+} \\
 &=\left. \frac{\dd}{\dd s} \left[ \frac{2^{1-s}}{2-s}n^2-\frac{\Gamma(n)\Gamma(1-s/2)}{2^{s}\Gamma(n+1-s/2)}n^2 \right] \right|_{s=0^+}.
\end{align*}
It is well known that
\[\Gamma'(n)=\Gamma(n) \left(- \gamma+\sum_{j=1}^{n-1} \frac{1}{j}\right)\]
(see, e.g.  6.3.1-2 of \cite{Abramowitz}).
Using (\ref{appro: Harmonic}) and above equation, we obtain the claim.
\end{proof}

\begin{proof}[Proof of Corollary \ref{Cor: Riesz energy}]
For $0<s<2$, from Theorem \ref{theorem: Riesz energy}, we conclude that there exist $n$-point set $\{x_1,\ldots,x_n\}$ such that
\[E_s(x_1,\ldots,x_n)\leq \frac{2^{1-s}}{2-s}n^2-\frac{\Gamma(n)\Gamma(1-s/2)}{2^{s}\Gamma(n+1-s/2)}n^2.\] 
Thus, by definition
\[\mathcal{E}_{s}(n) - \frac{2^{1-s}}{2-s}n^2 \leq-\frac{\Gamma(n)\Gamma(1-s/2)}{2^{s}\Gamma(n+1-s/2)}n^2.\] 
It is well known that
\[\lim_{n \to +\infty} \frac{\Gamma(n) n^{1-s/2}}{\Gamma(n+1-s/2)}=1.\]
Also, For $0<\beta<1$ and $x>0$, we have (See \cite{Gamma})
\begin{equation} \label{inequality: Wendel}
\frac{x}{(x+\beta)^{1-\beta}}\leq \frac{\Gamma(x+\beta)}{\Gamma(x)}\leq x^\beta.
\end{equation}
Hence, for $0<s<2$,
 \[\frac{\Gamma(n+1-s/2)}{\Gamma(n)} \leq n^{1-s/2}.\]
Consequently, we get 
\[\mathcal{E}_{s}(n) - \frac{2^{1-s}}{2-s}n^2 \leq-\frac{\Gamma(1-s/2)}{2^{s}}n^{1+s/2}.\] 
if we set $\beta=s/2$ and $x=1-s/2$ in (\ref{inequality: Wendel}), we get for $0<s<2$, $\Gamma(1-s/2)>1$. Thus, for $0<s<2$, we have
\[\frac{\Gamma(1-s/2)}{2^s}>(2\sqrt{2\pi})^{-s}\]
and this shows that the bound (\ref{Energy: new upper bound}) is better than (\ref{Energy: upper bound}). See Figure \ref{Fig: Energy}.
\\
For $-2<s<0$, from (\ref{inequality: Wendel}), we have
\[\frac{\Gamma(n+1-s/2)}{\Gamma(n)}=(n-s/2)\frac{\Gamma(n-s/2)}{\Gamma(n)} \geq n(n-s/2)^{-s/2}\geq n^{1-s/2}.\]
Therefore, using similar argument as above, we get (\ref{Energy: new lower bound}).
If we set $\beta=-s/2$ and $x=1$ in (\ref{inequality: Wendel}), we then have for $-2<s<0$, $\Gamma(1-s/2)<1$, therefore
\[\frac{\Gamma(1-s/2)}{2^s}<(2\sqrt{2\pi})^{-s}.\]
So the bound (\ref{Energy: new lower bound}) is better than (\ref{Energy: lower bound}). See Figure \ref{Fig: Energy}.

From (\ref{thm: Energy part 3}), we easily obtain (\ref{Energy: new upper bound for s=2}). To control the error term $O(1/n^2)$ in (\ref{thm: Energy part 3}) see the proof of Theorem 3 in \cite{Boas}. This term is positive and bounded by $\frac{1}{480 n^2}$. 
\end{proof}
\begin{figure}[h]
\begin{center}
 \includegraphics[width=110mm]
{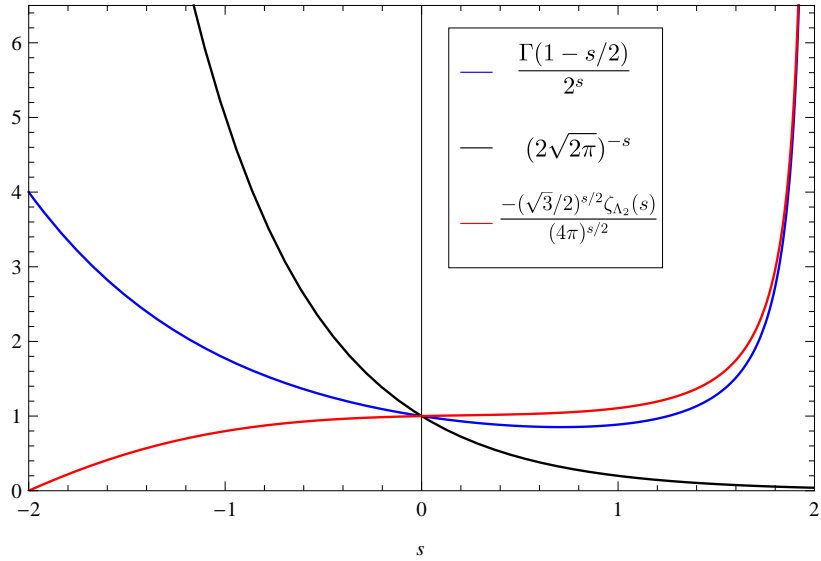}
 \caption{Blue curve: The bound for $(n^2-\mathcal{E}_s(n))/n^{1+s/2}$ given by Corollary \ref{Cor: Riesz energy}. Black curve: The asymptotic bound for $(n^2-\mathcal{E}_s(n))/n^{1+s/2}$ given by \ref{Energy: upper bound} and \ref{Energy: lower bound}. Red curve: conjectured value of $(n^2-\mathcal{E}_s(n))/n^{1+s/2}$ as $n \to +\infty$.} \label{Fig: Energy}
\end{center}
\end{figure}
\begin{rem}
For $s \geq 4$, the expected value of Riesz $s$-energy for $\mathcal{X}^{(n)}$ is infinite. Compare with the points that are chosen randomly and independently on the sphere, with the uniform distribution. in this case,  the expected value of Riesz $s$-energy is infinite for $s \geq 2$. 
\end{rem}

\begin{rem}
Another interesting random point process on $\Sphere$ is the roots of random polynomials via the stereographic projection. Let $f(z)=\sum_{j=0}^n a_j z^j$ where the coefficients $a_j$ are independent complex Gaussian random variables with mean 0 and variance $\binom{n}{j}$. Let $z_1, \ldots, z_n$ are the complex zeros of $f(z)$. In \cite{Shub2011} the expectation of the logarithmic energy for point process $\{g^{-1}(z_1),\ldots,g^{-1}(z_n)\}$ was computed. It was shown that
\[\E E_{\log} (g^{-1}(z_1),\ldots,g^{-1}(z_n))=\left(\frac{1}{2}-\log 2\right)n^2-\frac{1}{2} n\log n-\left(\frac{1}{2}-\log 2\right)n.\]
Compare with (\ref{thm: Energy part 1}).
\end{rem}

\begin{rem}
 Another criterion for the quality of the distribution of points on the sphere is the spherical cap $L^2$-discrepancy, which is given by
\[\Disc_2(\{x_1,\ldots,x_n\})=\left( \int_{-1}^1  \frac{1}{4\pi}  \int_{\Sphere} \left| \sum_{j=1}^n 1_{D(p,t)} (x_j)-\frac{n|D(p,t)|}{4 \pi} \right|^2 \, \dd \nu(p)  \,\dd t\right)^{1/2}.\]
Here $D(p,t)=\{q \in \Sphere: \langle p,q \rangle \leq t \}$.
Stolarsky's  invariance theorem says that (see \cite{Stolarsky})
\[\frac{1}{2} \sum_{i,j=1}^n |x_i-x_j| +\Disc^2_2(\{x_1,\ldots,x_n\})=\frac{2}{3} n^2\]
from (\ref{thm: Energy part 2}) we have  
\[\E \Disc^2_2(P_1,\ldots,P_n)=\frac{\Gamma(3/2)\Gamma(n)n^2}{\Gamma(n+3/2)} \leq \Gamma(3/2) \sqrt{n}.\]
which implies, for $n \geq 2$,
\[\E \Disc_2(P_1,\ldots,P_n) \leq \sqrt{\Gamma(3/2)} \sqrt[4]{n}.\]
\end{rem}

\section{{\bf Minimum spacing}}
\subsubsection*{Point pair statistics} 
Recall the definition (\ref{def: point pair statistics}) of the function $G_{t,n}$. We first compute the expectation of $G_{t,n}$. The argument is similar to what was done in the proof of Theorem \ref{theorem: Riesz energy} in Section 4.  Let $F(p,q)=1_{\{|p-q|\leq t\}}$. From (\ref{expectation and pho2}) and Fubini's theorem we have
\begin{equation} \label{eqn: point pair statistics}
2\E G_{t,n}=4\pi \int_{\Sphere} F(p,q) \mathring{\rho}_2^{(n)}(p,q) \,\dd \nu(p).
\end{equation}
Thus
\begin{align*}
\E G_{t,n}&=n^2 \int_{0}^{\frac{t}{\sqrt{4-t^2}}} r\left(\frac{1}{(1+r^2)^2}-\frac{1}{(1+r^2)^{n+1}}\right) \,\dd r \\
&=\frac{n^2 t^2}{8}-\frac{n}{2}\left(1-(1-t^2/4)^n\right).
\end{align*}
 If we set $t=o(\frac{1}{\sqrt{n}})$ then we see that
 \[\E G_{t,n}=\frac{n^3t^4}{64}(1+o(1))\]
and for $t=\frac{x}{n^{3/4}}$, where $x>0$ is fixed, we get
\begin{equation} \label{expectation of point pair statistics}
\lim_{n \to +\infty} \E G_{t,n}=\frac{x^4}{64}.
\end{equation}
The above equation shows that the correct scaling for the minimum spacing is $n^{-3/4}$. To prove Theorem \ref{thm: Minimum spacing}, similar to \cite{Arous}, we will use a modification of the method from Soshnikov. This method has been used in \cite{Arous} and \cite{Soshnikov2} (see also \cite{Soshnikov3}) to solve similar problems in one-dimensional case. We will modify this method that works in our case. Moreover this modification makes the proof simpler.  

The following two lemmas will be used frequently in the proof.
\begin{lem}
For $p,q \in \Sphere$ such that $|p-q|=O(n^{-3/4})$ we have
\begin{equation} \label{appro: Rho2}
\mathring{\rho}_{2}^{(n)}(p,q)=O(n^{3/2}).
\end{equation}
\end{lem}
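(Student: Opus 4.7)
The plan is to read the estimate directly off the closed-form expression for $\mathring{\rho}_2^{(n)}$ derived in Section 4. By equation (\ref{eqn: Rho2}),
\[
\mathring{\rho}_2^{(n)}(p,q) = \left(\frac{n}{4\pi}\right)^{\!2}\!\left(1-\left(1-\frac{|p-q|^2}{4}\right)^{n-1}\right),
\]
so the task reduces to bounding the factor $1-(1-u)^{n-1}$ with $u = |p-q|^2/4 = O(n^{-3/2})$.

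Under the hypothesis $|p-q|=O(n^{-3/4})$, we have $(n-1)u = O(n\cdot n^{-3/2}) = O(n^{-1/2})$, which in particular tends to $0$. First I would note the elementary inequality $0 \leq 1-(1-u)^{n-1} \leq (n-1)u$, valid for all $u \in [0,1]$ and all $n \geq 1$ (this follows from Bernoulli's inequality $(1-u)^{n-1} \geq 1-(n-1)u$, or alternatively by factoring $1-(1-u)^{n-1} = u\sum_{j=0}^{n-2}(1-u)^j \leq (n-1)u$). Applying this with $u = |p-q|^2/4$ gives
\[
1-\left(1-\frac{|p-q|^2}{4}\right)^{n-1} \leq \frac{(n-1)|p-q|^2}{4} = O(n\cdot n^{-3/2}) = O(n^{-1/2}).
\]

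Combining the two pieces,
\[
\mathring{\rho}_2^{(n)}(p,q) \leq \left(\frac{n}{4\pi}\right)^{\!2}\cdot O(n^{-1/2}) = O(n^{3/2}),
\]
which is the claim. There is no real obstacle here: the only step that requires any care is the Bernoulli-type bound on $1-(1-u)^{n-1}$, and this is immediate. Note that the estimate is in fact sharp in order (up to constants), since in the regime $|p-q| \sim n^{-3/4}$ both factors $(n/4\pi)^2$ and $(n-1)|p-q|^2/4$ are matched by lower bounds of the same order, which is precisely what drives the scaling $n^{-3/4}$ appearing in Theorem \ref{thm: Minimum spacing}.
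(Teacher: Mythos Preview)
Your proof is correct and follows exactly the approach the paper intends: the paper's own proof is the single sentence ``The proof is immediate from equation (\ref{eqn: Rho2}),'' and your argument simply spells out that immediacy via the Bernoulli bound $1-(1-u)^{n-1}\le (n-1)u$.
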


\begin{proof}
The proof is immediate from equation (\ref{eqn: Rho2}).
\end{proof}
The next lemma will be used to control the $k$-point correlation function in terms of the lower order correlation functions. For the proof see \cite{Hadamard}.
\begin{lem} (Hadamard-Fischer inequality)
Let $M$ be an $n \times n$ (Hermitian) positive definite matrix and let $\omega \subset \{1,2,\ldots,n\}$ be an index set. Let $M_{\omega}$ be the submatrix of $M$ using rows and columns numbered in $\omega$. Then
\[\det(M) \leq \det(M_w) \det(M_{{\bar{\omega}}})\]
where $\bar{\omega}=\{1,2,\ldots,n\}-\omega$.
\end{lem}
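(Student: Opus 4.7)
The plan is to reduce to the standard block-matrix form and then exploit the Schur complement together with the fact that the determinant is monotone on the cone of Hermitian positive-definite matrices. By simultaneously permuting the rows and columns of $M$ (which does not alter $\det(M)$, and permutes $M_\omega$ and $M_{\bar\omega}$ only internally, so does not alter their determinants either), I may assume without loss of generality that $\omega=\{1,\dots,k\}$ and $\bar\omega=\{k+1,\dots,n\}$. Then $M$ admits the block decomposition
\[
M=\begin{pmatrix} A & B \\ B^{*} & C \end{pmatrix},
\qquad A=M_\omega,\ \ C=M_{\bar\omega},
\]
with $A$ and $C$ themselves Hermitian positive-definite (as principal submatrices of a positive-definite matrix).

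Next I would invoke the Schur-complement factorization. Since $A$ is invertible, one has the block $LDU$-type identity
\[
\begin{pmatrix} A & B \\ B^{*} & C \end{pmatrix}
=\begin{pmatrix} I & 0 \\ B^{*}A^{-1} & I \end{pmatrix}
\begin{pmatrix} A & 0 \\ 0 & C-B^{*}A^{-1}B \end{pmatrix}
\begin{pmatrix} I & A^{-1}B \\ 0 & I \end{pmatrix},
\]
and taking determinants (the outer factors are unipotent, hence have determinant one) gives
\[
\det(M)=\det(A)\,\det(C-B^{*}A^{-1}B).
\]
Writing $S:=C-B^{*}A^{-1}B$ for the Schur complement, it therefore suffices to prove the operator-level inequality $S\preceq C$ in the Loewner order and then pass to determinants.

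The inequality $S\preceq C$ is immediate: $C-S=B^{*}A^{-1}B$, and since $A\succ 0$ implies $A^{-1}\succ 0$, the matrix $B^{*}A^{-1}B$ is positive semi-definite (it equals $(A^{-1/2}B)^{*}(A^{-1/2}B)$). To convert this to a determinant inequality I would prove the standard monotonicity lemma: if $C\succ 0$ and $S\succ 0$ with $C\succeq S$, then $\det(C)\ge\det(S)$. Indeed, factoring
\[
C=S^{1/2}\bigl(I+S^{-1/2}(C-S)S^{-1/2}\bigr)S^{1/2},
\]
the middle factor is of the form $I+T$ with $T\succeq 0$, so all its eigenvalues are $\ge 1$ and hence $\det(I+T)\ge 1$. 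Combining with the Schur-complement identity yields
\[
\det(M)=\det(A)\,\det(S)\le\det(A)\,\det(C)=\det(M_\omega)\,\det(M_{\bar\omega}),
\]
which is exactly the claimed inequality. There is no serious obstacle here; the only subtlety is ensuring $A\succ 0$ so that the Schur complement is defined, but this follows automatically from positive definiteness of $M$ (the quadratic form of $M$ restricted to vectors supported on $\omega$ coincides with that of $A$).
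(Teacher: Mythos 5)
Your argument is correct. Note that the paper does not prove this lemma at all: it is quoted as a known fact and the proof is deferred to the reference (Horn and Johnson's \emph{Matrix Analysis}), so there is no in-paper proof to compare against; your Schur-complement route is the standard textbook argument for Fischer's inequality and is essentially what that reference does. The only point worth tightening is that your monotonicity step uses $S^{-1/2}$, so you need $S=C-B^{*}A^{-1}B\succ 0$ and not merely $S\preceq C$; this is immediate from the very block factorization you wrote, since it exhibits $\operatorname{diag}(A,S)$ as congruent to $M$ (via the unipotent factors), hence positive definite, so $S\succ 0$ and the factorization $C=S^{1/2}\bigl(I+S^{-1/2}(C-S)S^{-1/2}\bigr)S^{1/2}$ is legitimate. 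With that one sentence added, the proof is complete; alternatively, the last step can be shortened by noting $\det(C)=\det(S)\det\bigl(I+S^{-1/2}(C-S)S^{-1/2}\bigr)\ge\det(S)$ exactly as you argue, and the case $\omega$ or $\bar\omega$ empty is trivial.
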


Fix $c>0$ and let $t=\frac{c}{n^{3/4}}$. Define for $p \in \Sphere$
\[C^{(n)}(p)=\{q \in \Sphere : |p-q| \leq t \}\]
Consider the random point process
\[\mathcal{X}^{(n)}=\sum_{i=1}^{n} \delta_{P_i}.\]
 We define a new point process $\modifiedX^{(n)}$.
First consider all pairs $(P_i,P_j)$ such that $n^{3/4}\left|P_i-P_j\right|<x$ and 
\[C^{(n)}(P_i)=\{P_j\} \quad , \quad C^{(n)}(P_j)=\{P_i\}.\]
Then from each pair select independently with probability $\frac{1}{2}$ one of the two items, and consider all this points as $\modifiedX^{(n)}$. (Compare this with the modified processes that have been used in \cite{Arous, Soshnikov2} in similar cases.)  Then let $Z_n=\modifiedX^{(n)}(\Sphere)$.
\begin{lem} \label{lem: 3 close points}
\[G_{t,n}-Z_n \todist 0.\]
\end{lem}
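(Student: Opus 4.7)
The plan is to bound $G_{t,n} - Z_n$ above by the number of triples of pairwise close points of $\mathcal{X}^{(n)}$ and show that this number has expectation vanishing as $n \to +\infty$. By construction $Z_n$ equals the number of \emph{isolated} close pairs, i.e.\ pairs $(P_i,P_j)$ with $n^{3/4}|P_i-P_j|<x$ such that neither $P_i$ nor $P_j$ has any other point of $\mathcal{X}^{(n)}$ within distance $t = c/n^{3/4}$. Consequently $G_{t,n} - Z_n$ counts the close pairs that fail to be isolated, and to each such non-isolated pair one can attach at least one witness $P_k$ satisfying $|P_i - P_k| \leq c/n^{3/4}$ or $|P_j - P_k| \leq c/n^{3/4}$. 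Hence
\[0 \leq G_{t,n} - Z_n \leq T_n,\]
where $T_n$ is the number of ordered triples $(P_i,P_j,P_k)$ of distinct points with $|P_i - P_j| \leq x/n^{3/4}$ and $\min(|P_i - P_k|, |P_j - P_k|) \leq c/n^{3/4}$. Since $G_{t,n} - Z_n$ is a nonnegative integer, Markov's inequality reduces the lemma to proving $\E T_n \to 0$.

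To estimate $\E T_n$ I would write it as an integral of the $3$-point correlation function $\mathring{\rho}_3^{(n)}$ against the indicator of the closeness constraints, split the integration domain into two pieces according to whether the witness $P_k$ is close to $P_i$ or to $P_j$, and in each piece apply the Hadamard--Fischer inequality to the $3\times 3$ Hermitian positive definite matrix $[\mathring{K}^{(n)}(p_a,p_b)]$ with $\omega$ chosen to consist of the two indices whose points are constrained to be close. This yields a bound of the form
\[\mathring{\rho}_3^{(n)}(p_1,p_2,p_3) \leq \mathring{\rho}_2^{(n)}(p_a,p_b)\,\mathring{K}^{(n)}(p_c,p_c),\]
and combining \eqref{appro: Rho2} with $\mathring{K}^{(n)}(p,p) = n/(4\pi)$ (from \eqref{eqn: norm of kernel}) gives the pointwise estimate $\mathring{\rho}_3^{(n)} = O(n^{5/2})$ throughout the relevant domain.

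For the volume, rotational invariance of $\mathcal{X}^{(n)}$ lets me fix $p_1$ and pull out a factor of $4\pi$ from the outer integral. The spherical cap $\{p_2 : |p_1 - p_2| \leq x/n^{3/4}\}$ has area $O(n^{-3/2})$, and the set $\{p_3 : \min(|p_1 - p_3|, |p_2 - p_3|) \leq c/n^{3/4}\}$ is covered by two caps of radius $c/n^{3/4}$ and therefore also has area $O(n^{-3/2})$. Multiplying the integrand bound by the volume estimate gives
\[\E T_n = O\bigl(n^{5/2} \cdot n^{-3/2} \cdot n^{-3/2}\bigr) = O(n^{-1/2}) \longrightarrow 0,\]
which completes the proof via Markov. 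The only real care needed is the bookkeeping to choose the correct partition $\omega$ in Hadamard--Fischer on each subregion, so that the factor $\mathring{\rho}_2^{(n)}$ in the bound always lives on a pair of points where \eqref{appro: Rho2} is applicable; this is the main (mild) obstacle.
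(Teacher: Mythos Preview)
Your proposal is correct and follows essentially the same route as the paper: bound $G_{t,n}-Z_n$ by a count of triples of mutually close points, express its expectation as an integral of $\mathring\rho_3^{(n)}$, apply Hadamard--Fischer together with \eqref{appro: Rho2} to get the pointwise bound $O(n^{5/2})$, and finish with the volume estimate $O(n^{-3})$. The only cosmetic difference is that the paper, instead of splitting into the two cases ``$P_k$ close to $P_i$'' versus ``$P_k$ close to $P_j$'', relabels a non-isolated pair as $(P_i,P_j,P_k)$ or $(P_j,P_i,P_k)$ so that in the resulting triple both $P_j$ and $P_k$ lie in $C^{(n)}(P_i)$; this yields the single integral \eqref{inequality: pairs and triples} and lets them apply Hadamard--Fischer once with $\omega=\{q_1,q_2\}$ (the triangle inequality guarantees $|q_1-q_2|=O(n^{-3/4})$), avoiding your case split. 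Your version is equally valid and the arithmetic is identical.
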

\begin{proof}
We show that $G_{t,n} - Z_n \overset{\mbox{\rm \scriptsize $\L^1$}}{\longrightarrow} 0.$ First note that we have
\begin{equation} \label{inequality: pairs and triples}
\E |G_{t,n} - Z_n| \leq \int_{\Sphere} \int_{C^{(n)}(p) \times C^{(n)}(p)} \mathring{\rho}_{3}^{(n)} (p,q_1,q_2) \,\dd \nu(q_1) \,\dd \nu(q_2) \,\dd \nu(p). 
\end{equation}
To see this, observe that $G_{t,n}-Z_n$ is equal to number of pairs $(P_i,P_j)$ such that $0<|P_i-P_j|<t$ and there exist some point $P_k$ where $0<|P_i-P_k|<t$ or $0<|P_j-P_k|<t$. By considering the triples $(P_i,P_j,P_k)$ or $(P_j,P_i,P_k)$ we see that 
\[G_{t,n}-Z_n \leq \sum_{\substack{i,j,k \\ \mbox{\scriptsize{\textit{pairwise distinct}}}}} 1_{\{P_j \in C^{(n)}(P_i) , P_k \in C^{(n)}(P_i)\}} .\]
 Thus, taking expectation gives (\ref{inequality: pairs and triples}).
 
If $q_1,q_2 \in C^{(n)}(p)$ then $|q_1-q_2|=O(n^{-3/4})$ and using Hadamard-Fischer inequality and (\ref{appro: Rho2}) we obtain 
\[\mathring{\rho}_{3}^{(n)} (p,q_1,q_2) \leq \mathring{\rho}_{1}^{(n)} (p) \mathring{\rho}_{2}^{(n)} (q_1,q_2)=\frac{n}{4\pi} \mathring{\rho}_{2}^{(n)} (q_1,q_2)=O(n^{5/2}).\]
By integrating on the domain of area $4\pi |C^{(n)}(p)|^2=O(n^{-3})$, we conclude $\E |G_{t,n} - Z_n| \to 0$ as n goes to infinity. 
\end{proof}
Our goal is to prove $G_{t,n} \todist \mbox{Poisson}(\frac{x^4}{64})$ where $t=\frac{x}{n^{3/4}}$. Thanks to the Lemma \ref{lem: 3 close points}, it thus suffices to show that $Z_n \todist \mbox{Poisson}(\frac{x^4}{64})$. Denote $\displaystyle \tilde{\rho}_{k}^{(n)}(p_1,\ldots,p_k)$ as the $k$-point correlation function $\tilde{\mathcal{X}}^{(n)}$. From (\ref{eqn: def of rho}) we have 
\[\E \frac{Z_n!}{(Z_n-k)!}=\int_{(\Sphere)^{k}} \tilde{\rho}^{(n)}_k(p_1,\ldots , p_k) \,\dd \nu(p_1) \ldots \,\dd \nu(p_k).\]
So using the moment method it suffices to show that for every $k \geq 1$
\begin{equation} \label{Minimum spacing: Integral}
\int_{(\Sphere)^{k}} \tilde{\rho}^{(n)}_k(p_1,\ldots , p_k) \,\dd \nu(p_1) \ldots \,\dd \nu(p_k) \underset{n\to\infty}{\longrightarrow} \left(\frac{x^4}{64}\right)^k.
\end{equation}
( The $k$-th factorial moment of the Poisson distribution with mean $\lambda$ is equal to $\lambda^k$.)
\begin{proof}[Proof of Theorem \ref{thm: Minimum spacing}]
Let $p_1,\ldots , p_k$ be fixed distinct elements in $(\Sphere)^k$. First we show that
\begin{equation} \label{Minimum spacing: Pointwise}
\tilde{\rho}^{(n)}_k(p_1,\ldots , p_k) \underset{n\to\infty}{\longrightarrow} \left(\frac{x^4}{256 \pi}\right)^k.
\end{equation}
For $n$ large enough we make assume that for $i \neq j$ 
\[|p_i-p_j| >4t.\]
From inclusion-exclusion argument, we have (see \cite{Soshnikov2})
\begin{multline}\label{eqn:inclusionExclusion}
\tilde\rho^{(n)}_k(p_1,\dots,p_k)= \\
\frac{1}{2^k} \sum_{m=0}^{n-2k} \frac{(-1)^m}{m!}\int_{C^{(n)}(p_k)} \dots
\int_{C^{(n)}(p_1)}
\int_{\big(\bigsqcup_{i=1}^{k} C^{(n)}(p_i) \cup C^{(n)}(q_i)\big)^m} \\
\mathring{\rho}^{(n)}_{2k+m}(p_1,q_1,\dots,p_k,q_k,r_1,\dots,r_m)\dd \nu(r_1)\dots\dd \nu(r_m) \, \dd \nu(q_1) \ldots \dd \nu(q_k) .
\end{multline}
First consider the $m=0$ case. From determinantal formula, we have
\begin{equation} \label{eqn: Rho2k} 
\mathring{\rho}^{(n)}_{2k}(p_1,q_1,\ldots,p_k,q_k)=\det_{1 \leq i,j \leq k} 
\begin{pmatrix}
\mathring{K}^{(n)}(p_i,p_j) & \mathring{K}^{(n)}(p_i,q_j) \\[1mm]
\mathring{K}^{(n)}(q_i,p_j)&\mathring{K}^{(n)}(q_i,q_j) 
\end{pmatrix}.
\end{equation}
Consider the $(i,j)$-th $2 \times 2$ block of above determinant. If $i \neq j$ from (\ref{eqn: norm of kernel}) all terms of this block are exponentially small in $n$ where $q_j \in C^{(n)}(p_j), 1 \leq j \leq k$. Also for $i=j$, from (\ref{appro: Rho2}) the determinant of $(i,i)$-th $2 \times 2$ block is
\begin{equation*}
\mathring{\rho}^{(n)}_2(p_i,q_i)=\det
\begin{pmatrix}
\frac{n}{4\pi} & \mathring{K}^{(n)}(p_i,q_i)   \\[2mm]
\mathring{K}^{(n)}(q_i,p_i)&\frac{n}{4\pi}
\end{pmatrix}
=O(n^{3/2}).
\end{equation*}
So from the expansion of the determinant in (\ref{eqn: Rho2k}) over all permutations of length $2k$ we conclude that only the terms  contain the entries in the diagonal $2 \times 2$ blocks can have a non-zero limit. Note that $|\mathring{K}^{(n)}| \leq \frac{n}{4\pi}$ and the integration domain of $\mathring{\rho}^{(n)}_{2k}$ has size $O(n^{\frac{-3k}{2}})$. Thus from (\ref{eqn: point pair statistics}) and (\ref{expectation of point pair statistics}) we have
\begin{align*}
\int_{C^{(n)}(p_k)} \dots
\int_{C^{(n)}(p_1)} \mathring{\rho}^{(n)}_{2k}(p_1,q_1,\dots ,p_k,q_k)\, \dd \nu(q_1) \ldots \dd \nu(q_k) \\
=o(1)+\prod^{k}_{i=1} \int_{C^{(n)} (p_i)} \mathring{\rho}^{(n)}_{2}(p_i,q_i)\, \dd \nu(q_i) \to \left(\frac{x^4}{128 \pi}\right)^k
\end{align*}
as $n \to +\infty$.
By Hadamard-Fischer inequality we conclude that the contribution of the terms corresponding to $m \geq 1$ in (\ref{eqn:inclusionExclusion}) is bounded by
\begin{align*}
\Bigg(\int_{C^{(n)}(p_k)} \dots \int_{C^{(n)}(p_1)} \mathring{\rho}^{(n)}_{2k}(p_1,q_1,\dots,p_k,q_k) \, \dd \nu(q_1) \ldots \dd \nu(q_k) \Bigg) \times \nonumber \\ 
\sum_{m=1}^{n-2k} \frac{1}{m!}
\bigg(\int_{\bigsqcup_{i=1}^{k} \big( C^{(n)}(p_i) \cup C^{(n)}(q_i) \big)}
\mathring{\rho}^{(n)}_{1}(r_1) \dd \nu(r_1)\bigg)^m  
\end{align*}
The integration domain $\bigsqcup_{i=1}^{k} \big( C^{(n)}(p_i) \cup C^{(n)}(q_i) \big)$ has size $O(n^{-3/2})$ and $\mathring{\rho}^{(n)}_{1}(r_1)=\frac{n}{4\pi}$. Thus the second term of the above product goes to zero as $n \to +\infty$. Also the first factor of the above product is just the $m=0$ case, which converges. Thus the whole expression goes to zero as $n \to +\infty$ and (\ref{Minimum spacing: Pointwise}) is obtained. 

Also for all $(p_1, \ldots ,p_k) \in (\Sphere)^k$ we have
\begin{align*}
\tilde{\rho}^{(n)}_k(p_1,\ldots , p_k) &\leq 
\int_{C^{(n)}(p_k)} \dots \int_{C^{(n)}(p_1)} \mathring{\rho}^{(n)}_{2k}(p_1,q_1,\dots ,p_k,q_k)\, \dd \nu(q_1) \ldots \dd \nu(q_k) \\
&\leq \prod^{k}_{i=1} \int_{C^{(n)} (p_i)} \mathring{\rho}^{(n)}_{2}(p_i,q_i)\, \dd \nu(q_i) =\left(\frac{x^4}{128 \pi}\right)^k
\end{align*}
 Finally, From (\ref{Minimum spacing: Pointwise}) and the dominated convergence theorem, one obtains (\ref{Minimum spacing: Integral}), and the claim follows.
\end{proof}

\end{document}